\newtheorem{thm}{Theorem}[section]
\newtheorem{cor}[thm]{Corollary}
\newtheorem{lem}[thm]{Lemma}
\theoremstyle{definition}
\theoremstyle{remark}
\newtheorem{rem}[thm]{Remark}
\numberwithin{equation}{section}
\begin{document}

\title[]{Solutions and stability of a variant of Van Vleck's and d'Alembert's functional equations}%
\author{E. Elqorachi , A. Redouani and Th M. Rassias}
\thanks{2000 Mathematics Subject Classification. 39B32, 39B52}
 \keywords{semigroup; d'Alembert's equation; Van Vleck's equation,  sine function; involution;  multiplicative function, homomorphism, superstability.}
\begin{abstract}
In this  paper. (1) We determine the complex-valued solutions of the
following variant of Van Vleck's functional equation
$$\int_{S}f(\sigma(y)xt)d\mu(t)-\int_{S}f(xyt)d\mu(t) = 2f(x)f(y),
\;x,y\in S,$$  where $S$ is a semigroup, $\sigma$ is an involutive
morphism of $S$, and $\mu$ is a complex measure that is linear
combinations of Dirac measures $(\delta_{z_{i}})_{i\in I}$, such
that for all $i\in I$, $z_{i}$ is  contained in the center of $S$.
(2) We  determine the  complex-valued continuous solutions of the
following variant of d'Alembert's functional equation
$$\int_{S}f(xty)d\upsilon(t)+\int_{S}f(\sigma(y)tx)d\upsilon(t) = 2f(x)f(y),
\;x,y\in S,$$ where $S$ is a topological semigroup, $\sigma$ is a
continuous involutive automorphism of $S$, and $\upsilon$ is a
complex measure with compact support and which is
$\sigma$-invariant. (3) We prove the superstability theorems of the
first functional equation.
\end{abstract}
\maketitle
\section{Introduction}
In his  two papers \cite{V1,V2}  Van Vleck  studied the continuous
solutions $f$ : $\mathbb{R} \longrightarrow \mathbb{R}$, $f\neq 0$
of the following functional equation
\begin{equation}\label{eq1}
f(x - y + z_0)-f(x + y + z_0) = 2f(x)f(y),\;x,y \in
\mathbb{R},\end{equation} where $z_0>0$ is fixed. He showed first
that all solutions are periodic with period $4z_0$, and then he
selected for his study any continuous solution with minimal period
$4z_0$. He proved that any such  solution has to be the sine
function
\\$f(x)=\sin(\frac{\pi}{2z_0}x)=\cos(\frac{\pi}{2z_0}(x-z_{0})$, $x\in \mathbb{R}$. \\
  Stetk\ae r [9,
Exercise 9.18] found the complex-valued solution of equation
\begin{equation}\label{eq3} f(xy^{-1}z_0)-f(xyz_0) =
2f(x)f(y),\;x,y \in G,\end{equation} on non
abelian groups $G$  and where  $z_0$ is a fixed element in the center of $G$.\\
Perkins and Sahoo \cite{P} replaced the group inversion by an
involution anti-automorphism $\tau$: $G\longrightarrow G$ and they
obtained the abelian, complex-valued solutions of equation
\begin{equation}\label{eqq4} f(x\tau(y)z_0)-f(xyz_0) = 2f(x)f(y),\;x,y
\in G.\end{equation}  Stetk\ae r \cite{St3} extends the results of
Perkins and Sahoo \cite{P} about equation (\ref{eqq4}) to the more
general case where $G$ is a semigroup and the solutions are not
assumed to be abelian.\\Recently,  Bouikhalene and Elqorachi
\cite{elq-boui} extends the results of Stetk\ae r's \cite{St3} and
obtain the solutions of the following extension of Van Vleck's
functional equations $$ \chi(y)f(x\tau(y)z_0)-f(xyz_0) =
2f(x)f(y),\;x,y \in S $$ and \begin{equation}\label{eq5}
\chi(y)f(\sigma(y)xz_0)-f(xyz_0) = 2f(x)f(y),\;x,y \in
M,\end{equation} where $S$ is a semigroup, $\chi$ is a
multiplicative, $M$ is a monoid, $\tau$ is an involution
anti-automorphism of  $S$ and $\sigma$ is an
involutive automorphism of $M$.\\
There has been quite a development of the theory of d'Alembert's
functional equation \begin{equation}\label{salma1}
    f(xy)+f(x\tau(y))=2f(x)f(y),\; x,y\in G,
\end{equation}  on non
abelian groups, as shown in works by Davison \cite{da1,da2} for
general groups, even monoids.  The non-zero solutions of equation
(\ref{salma1}) for general groups, even monoids are the normalized
traces of certain representations of the group $G$ on
$\mathbb{C}^{2}$ \cite{da1,da2}.\\ Stetk\ae r \cite{st4} obtained
the complex valued solutions of the following variant   of
d'Alembert's functional equation
\begin{equation}\label{salma3}
    f(xy)+f(\sigma(y)x)=2f(x)f(y),\; x,y\in S,
\end{equation} where  $\sigma$ is an involutive automorphism of the semigroup $S$.  The  solutions of equation (\ref{salma3}) are
of the form $f(x)=\frac{\chi(x)+\chi(\sigma(x))}{2}$, $x\in G$,
where $\chi$ is multiplicative.\\
In \cite{eb1} Ebanks and Stetk\ae r obtained the solutions $f,g$:
$G\longrightarrow \mathbb{C}$ of
 the following variant of Wilson's functional
equation (see also \cite{wilson5})
$$f(xy)+f(y^{-1}x)=2f(x)g(y),\; x,y\in G.
$$
In 1979, a type of stability was observed by J. Baker, J. Lawrence
and F. Zorzitto [5]. Indeed, they proved that if a function is
approximately exponential, then it is either a true exponential
function or bounded. Then the exponential functional equation is
said to be superstable. This result was the first result concerning
the superstability phenomenon of functional equations. Later, J.
Baker [4] (see also [1, 3, 6, 13]) generalized this result as
follows: Let $(S, .)$ be an arbitrary semigroup, and let $f:$
$S\longrightarrow \mathbb{C}$. Assume that $f$ is an approximately
exponential function, i.e., there exists a nonnegative number
$\delta$ such that $|f(x y)-f(x)f(y)|\leq\delta$ for all $x,y\in S$.
Then $f$ is either bounded or $f$ is a multiplicative function. The
result of Baker, Lawrence and Zorzitto [5] was generalized by L.
Sz\'ekelyhidi \cite{z1, ze3} in another way. We refer also to
\cite{omar0}, \cite{omar1}, \cite{omar2}, \cite{omar3}, \cite{J1},
\cite{J2}, \cite{K2}, \cite{omar5}, \cite{par} and \cite{th} for
other results concerning the stability and the superstability of
functional equations
\\In the  first part of this paper we extend the above
results to the following generalization of Van Vleck's functional
equation for the sine
\begin{equation}\label{eq555}
\int_{S}f(\sigma(y)xt)d\mu(t)-\int_{S}f(xyt)d\mu(t) = 2f(x)f(y),\;
x,y\in S,\end{equation}
 where $S$ is  a semigroup and  $\sigma$ is an involutive morphism : That
 is $\sigma$ is an involutive automorphism:
$\sigma(xy)=\sigma(x)\sigma(y)$ and $\sigma(\sigma(x))=x$ for all
$x,y$ or $\sigma$ is an involutive anti-automorphism:
$\sigma(xy)=\sigma(y)\sigma(x)$ and $\sigma(\sigma(x))=x$ for all
$x,y$, and $\mu$ is assumed to be a complex measure that is linear
combination of Dirac measures $(\delta_{z_{i}})_{i\in I}$, with
$z_{i}$ contained in the center of $S$, for all $i\in I$. \\The main
idea is to relate the functional equation (\ref{eq555}) to  to the
following variant of d'Alembert's functional equation
\begin{equation}\label{eq666}
    g(xy)+g(\sigma(y)x)=2g(x)g(y),\;x,y\in S
\end{equation} and we apply the result obtained by Stetk\ae r \cite{ st4, wilson5}.\\
In section 3, we obtain the the  complex-valued continuous solutions
of the following variant of d'Alembert's functional equation
\begin{equation}\label{elqorachi2}
\int_{S}f(xty)d\upsilon(t)+\int_{S}f(\sigma(y)tx)d\upsilon(t) =
2f(x)f(y), \;x,y\in S,\end{equation}
 where $S$ is a topological semigroup,
$\sigma$ is a continuous involutive automorphism of $S$, and
$\upsilon$ is a complex measure with compact support and which is
$\sigma$-invariant. That is
$\int_{S}h(\sigma(t))d\upsilon(t)=\int_{S}h(t)d\upsilon(t)$ for all
continuous function $h$ on $S$.\\In the last section we obtain the
superstability theorems of the functional equation (\ref{eq555}) .\\
In all proofs of the results of this paper we use without explicit
mentioning the assumption that   $z_i$ is contained in the center of
$S$ for all $i\in I$  and its consequence $\sigma(z_i)$ is contained
in the center of $S$.
\section{The complex-valued solutions of equation (\ref{eq555}) on semigroups}
In this section we determine the solutions of the variant Van
Vleck's functional equation (\ref{eq555}) on semigroups. We first
prove the following useful lemmas.
\begin{lem} Let $S$ be semigroup, let $\sigma$ :
$S\longrightarrow S$ be an involutive morphism of $S$ and   $\mu$ be
a complex measure that is linear combination of Dirac measures
$(\delta_{z_{i}})_{i\in I}$, with $z_{i}$  contained in the center
of $S$ for all $i\in I$. \\Let $f$: $S\longrightarrow \mathbb{C}$ be
a non-zero solution of equation (\ref{eq555}). Then   for all
$x,y\in S$ we have
\begin{equation}\label{eqo7}
    f(x)=-f(\sigma(x)),
\end{equation}
\begin{equation}\label{eqo8}
    \int_{S}f(t)d\mu(t)\neq 0,
\end{equation}
\begin{equation}\label{eqoo8}
    f(\sigma(y)x)=-f(\sigma(x)y),
\end{equation}

\begin{equation}\label{eqo9}
     \int_{S} \int_{S}f(x\sigma(t)s)d\mu(t)d\mu(s)=f(x) \int_{S} f(t)d\mu(t),
\end{equation}
\begin{equation}\label{eqo10}
     \int_{S} \int_{S}f(xts)d\mu(t)d\mu(s)=-f(x)\int_{S} f(t)d\mu(t),
\end{equation}
\begin{equation}\label{eqo11}
     \int_{S} f(\sigma(x)t)d\mu(t)= \int_{S}f(xt)d\mu(t),
\end{equation}
\begin{equation}\label{elq10}
 \int_{S}f(x\sigma(t))d\mu(t))=\int_{S}f(\sigma(x)\sigma(t))d\mu(t),\end{equation}
 \begin{equation}\label{omar100}
     \int_{S} \int_{S}f(ts)d\mu(t)d\mu(s)=\int_{S}
     \int_{S}f(t\sigma(s))d\mu(t)d\mu(s)=0.
\end{equation}
 \end{lem}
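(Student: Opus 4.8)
The plan is to derive the eight identities by manipulating the defining equation \eqref{eq555}, denoted here by $E(x,y)$, through substitutions $x\mapsto\sigma(x)$, $y\mapsto\sigma(y)$, interchanging the roles of $x$ and $y$, and by integrating $E(x,y)$ against $\mu$ in the variable $y$. Throughout I use freely that each $z_i$, hence each $\sigma(z_i)$, lies in the center of $S$, so that integration against $\mu$ commutes with left and right translations and with applying $\sigma$ inside the integrand up to the change $t\mapsto\sigma(t)$.

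First I would establish \eqref{eqo7}. Replace $y$ by $\sigma(y)$ in $E(x,y)$; using $\sigma(\sigma(y))=y$ and centrality this converts $\int_S f(\sigma(y)xt)\,d\mu(t)$ into $\int_S f(yxt)\,d\mu(t)$ and $\int_S f(xyt)\,d\mu(t)$ into $\int_S f(x\sigma(y)t)\,d\mu(t)$ (when $\sigma$ is an automorphism one also pushes $\sigma$ through the product; the anti-automorphism case is handled the same way with the order reversed), so one gets a companion equation whose left side is the negative of a symmetrized version of the original. Adding or subtracting the original $E(x,y)$ and using that $f\neq 0$ (so there exist $x_0,y_0$ with $f(x_0)f(y_0)\neq 0$), one isolates $f(x)=-f(\sigma(x))$. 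Once \eqref{eqo7} is in hand, \eqref{eqo8} follows by integrating $E(x,y)$ against $\mu$ in $y$: the left side becomes $\int\!\int f(\sigma(y)xt)\,d\mu(t)\,d\mu(y)-\int\!\int f(xyt)\,d\mu(t)\,d\mu(y)$, and by \eqref{eqo7} and a change of variable $y\mapsto\sigma(y)$ in the first double integral the two double integrals are seen to be negatives of each other up to a sign, yielding $2f(x)\int_S f(t)\,d\mu(t)$ on one side and a fixed multiple of $f(x)$ on the other; were $\int_S f\,d\mu=0$, this would force $f\equiv 0$, a contradiction. Identity \eqref{eqoo8} comes from comparing $E(x,y)$ with $E(y,x)$ after applying \eqref{eqo7}: $E(x,y)$ and $E(\sigma(x),\sigma(y))$ have the same right-hand side by \eqref{eqo7}, and matching left-hand sides term by term (again using centrality to move the $z_i$'s) gives $f(\sigma(y)x)=-f(\sigma(x)y)$ after canceling the common integral.

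The remaining identities \eqref{eqo9}--\eqref{omar100} are then obtained by integrating $E(x,y)$ (or the already-derived relations) against $\mu$ a second time and bookkeeping the signs. For \eqref{eqo11}, integrate the relation $f(\sigma(y)x)=-f(\sigma(x)y)$ against $\mu$ in the appropriate variable after first using \eqref{eqo7} to turn $f(\sigma(y)x)$ into $-f(\sigma(\sigma(y)x))$; for \eqref{elq10}, combine \eqref{eqo7} with \eqref{eqo11}. Equations \eqref{eqo9} and \eqref{eqo10} follow by integrating $E(x,y)$ in $y$ against $\mu$ while keeping the inner $\int_S\cdots d\mu(t)$, then using \eqref{eqo8} and \eqref{eqo7} to rewrite one double integral in terms of $f(x)\int_S f\,d\mu$; the sign discrepancy between \eqref{eqo9} and \eqref{eqo10} is exactly the sign in \eqref{eqo7}. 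Finally \eqref{omar100} is the special case of \eqref{eqo9}--\eqref{eqo10} obtained by a suitable normalization, together with \eqref{eqo8}: integrating once more collapses the left side to a multiple of $\int_S f\,d\mu$ times something that must vanish.

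The main obstacle I anticipate is purely organizational rather than conceptual: one must handle the automorphism and anti-automorphism cases of $\sigma$ in parallel, being careful that in the anti-automorphism case $\sigma(xyt)=\sigma(t)\sigma(y)\sigma(x)$, and then repeatedly invoke centrality of the $z_i$ to commute $\sigma(t)$ or $\sigma(s)$ past the other factors inside the integrals so that the changes of variable $t\mapsto\sigma(t)$ (which is a measure-preserving bijection of the support of $\mu$, since $\mu$ is a combination of Dirac masses at central points and $\sigma$ is involutive) can be applied. Keeping the signs straight through the chain \eqref{eqo7}$\Rightarrow$\eqref{eqo8}$\Rightarrow$\eqref{eqoo8}$\Rightarrow$\eqref{eqo9},\eqref{eqo10}$\Rightarrow$\eqref{eqo11},\eqref{elq10}$\Rightarrow$\eqref{omar100} is where the care is needed, but no step requires more than substitution and linearity of the integral.
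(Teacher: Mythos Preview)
Your proposal has a genuine gap, and it stems from one assumption that is not part of the hypotheses: you repeatedly invoke the change of variables $t\mapsto\sigma(t)$ (or $y\mapsto\sigma(y)$) inside integrals against $\mu$, calling it ``a measure-preserving bijection of the support of $\mu$''. But $\mu$ is only assumed to be a linear combination of Dirac masses at central points $z_i$; nothing forces $\sigma$ to permute the $z_i$'s or to preserve the coefficients. Indeed, the conclusion of Theorem~2.3 in the paper requires $\int_S\chi(\sigma(t))\,d\mu(t)=-\int_S\chi(t)\,d\mu(t)$, which would be impossible if $\mu$ were $\sigma$-invariant. So every step in your outline that rests on this change of variables---in particular your derivations of \eqref{eqo7}, \eqref{eqo8}, and \eqref{eqo11}---collapses.

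This error is tied to a second, structural issue: you try to prove \eqref{eqo7} first and then deduce \eqref{eqo8} from it, whereas the paper proceeds in nearly the opposite order. The paper first establishes \eqref{eqo8} by a substantial contradiction argument (assuming $\int_S f\,d\mu=0$, it derives a pointwise equation $f(xy)-f(\sigma(y)x)=\tfrac{2}{\alpha}f(x)f(y)$ and then, separately in the automorphism and anti-automorphism cases, forces $f=0$). Only after \eqref{eqo8} does it obtain \eqref{eqoo8}, then \eqref{elq10} and \eqref{eqo11}, and only with \eqref{eqo11} in hand does it prove \eqref{eqo7}. Your sketch for \eqref{eqo7} (``adding or subtracting the original $E(x,y)$ \ldots\ one isolates $f(x)=-f(\sigma(x))$'') does not go through without something like \eqref{eqo11} to relate $\int_S f(\sigma(\cdot)\,t)\,d\mu(t)$ to $\int_S f(\cdot\,t)\,d\mu(t)$, and that relation is precisely what you cannot get by a change of variables in $\mu$. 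You should reorganize along the paper's dependency chain \eqref{eqo8}$\Rightarrow$\eqref{eqoo8}$\Rightarrow$\eqref{elq10},\eqref{eqo11}$\Rightarrow$\eqref{eqo7}$\Rightarrow$\eqref{eqo9},\eqref{eqo10}$\Rightarrow$\eqref{omar100}, and drop any appeal to $\sigma$-invariance of $\mu$.
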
\begin{proof} Equation (2.2): Let $f\neq 0$  be a non-zero solution of equation (\ref{eq555}) and
assume that\\
 $\int_{S}f(t)d\mu(t)=0$. Taking $y=s$ in
 equation (\ref{eq555}) and integrate the result obtained with respect to $s$ we get
\begin{equation}\label{omar1}
 \int_{S}\int_{S}f(\sigma(s)xt)d\mu(s)d\mu(t)-\int_{S}\int_{S}f(xst)d\mu(s)d\mu(t) =
 2f(x)\int_{S}f(s)d\mu(s)=0\end{equation}
Replacing  $y$ by $ys$ in (\ref{eq555}) and integrating the result
 with respect to $s$ and using (\ref{omar1}) and (\ref{eq555}) we get
 $$\int_{S}\int_{S}f(\sigma(y)xt\sigma(s))d\mu(t)d\mu(s)-\int_{S}\int_{S}f(xyst)d\mu(s)d\mu(t) = 2f(x)\int_{S}f(ys)d\mu(s)$$
 $$=\int_{S}\int_{S}f(\sigma(y)xts)d\mu(t)d\mu(s)-\int_{S}\int_{S}f(xyst)d\mu(s)d\mu(t) $$
 $$= 2f(y)\int_{S}f(xt)d\mu(t),$$
 which implies that $f(y) \int_{S}f(xs)d\mu(s)=f(x)
 \int_{S}f(ys)d\mu(s)$ for all $x,y\in S.$ Since $f\neq 0$, then
 there exists $\alpha\in \mathbb{C}\backslash \{0\}$ such that
  $\int_{S}f(xs)d\mu(s)=-\alpha
 f(x)$ for all $x\in S.$ Substituting this into (\ref{eq555}) we get
 \begin{equation}\label{elq1}
   f(xy)-f(\sigma(y)x)=2\frac{f(x)}{\alpha}f(y)\; \text{for all }\;x,y \in S.
 \end{equation}
By interchanging $x$ with $y$ in (\ref{elq1}) we get
\begin{equation}\label{mohhh1}
 f(yx)-f(\sigma(x)y)=\frac{2}{\alpha}f(x)f(y)
\end{equation} If we replace $y$ by $\sigma(y)$ in (\ref{elq1}) we
have \begin{equation}\label{mohhh2}
 f(x\sigma(y))-f(yx)=\frac{2}{\alpha}f(x)f(\sigma(y))
\end{equation} By adding    (\ref{mohhh2}) and (\ref{mohhh1})  we obtain
\begin{equation}\label{mohhh3}
 f(x\sigma(y))-f(\sigma(x)y)=\frac{2}{\alpha}f(x)[f(\sigma(y))+f(\sigma(y))]
\end{equation} By replacing $x$ by $\sigma(x)$ in (\ref{mohhh3}) we
get \begin{equation}\label{mohhh4}
 -f(xy)+f(\sigma(x)\sigma(y))=\frac{2}{\alpha}f(\sigma(x))[f(\sigma(y))+f(\sigma(y))]
\end{equation} If we replace $y$ by $\sigma(y)$ in (\ref{mohhh3}) we
get \begin{equation}\label{mohhh5}
 -f(\sigma(x)\sigma(y))+f(xy)=\frac{2}{\alpha}f(x)[f(\sigma(y))+f(\sigma(y))].
\end{equation}Now, by adding   (\ref{mohhh4}) and
(\ref{mohhh5})  we have
\begin{equation}\label{mohhh6}
 [f(x)+f(\sigma(x))][f(\sigma(y))+f(\sigma(y))]=0
\end{equation} That is $f(\sigma(x))=-f(x)$ for all $x\in S$. Now, we will discuss two cases.
Case 1: If $\sigma$ is an involutive anti-automorphism. Then from
$f(\sigma(x))=-f(x)$ for all $x\in S$ we have
$f(\sigma(y)x)=-f(\sigma(x)y)$ for all $x,y\in S$ and equation
 (\ref{elq1}) can be written as follows
 \begin{equation}\label{elq2}
   f(xy)+f(\sigma(x)y)=2\frac{f(x)}{\alpha}f(y)\; \text{for all }\;x,y \in S.
 \end{equation}The left hand side of (\ref{elq2}) is unchanged under
 interchange of $x$ and $\sigma(x)$, so we get $f(x)=f(\sigma(x))$
 for all $x\in S$. Now,
 $f(x)=-f(\sigma(x))=-f(x)$ implies that $f=0$. This contradicts the
assumption that $f\neq 0$.\\Case 2:  If $\sigma$ is an involutive
automorphism. Then from $f(\sigma(x))=-f(x)$ for all $x\in S$ we
have $f(\sigma(y)x)=-f(y\sigma(x))$ for all $x,y\in S$ and equation
 (\ref{elq1}) can be written as follows
 \begin{equation}\label{elq2}
   f(xy)+f(y\sigma(x))=2\frac{f(x)}{\alpha}f(y)\; \text{for all }\;x,y \in S.
 \end{equation}By replacing $x$ by  $\sigma(x)$ in (\ref{elq2}) and using $f(\sigma(x))=-f(x)$
 we get $$l(yx)+l(\sigma(x)y)=2l(x)l(y)\; \text{for all }\;x,y \in S.$$
 where $l=-\frac{f}{\alpha}$. So, from \cite{st4} $f(\sigma(x))=f(x)$ for all $x\in
 S$. Consequently, $f(\sigma(x))=f(x)=-f(x)$ for all $x\in S$, which implies that $f=0$. This contradict
  the assumption that $f\neq0$ and  this proves the
assertion (\ref{eqo8}).\\ Equation (2.3): Replacing $y$ by $yt$ in
(\ref{eq555}) and integrating the result obtained with respect to
$t$ we get
\begin{equation}\label{elq4}
\int_{S}\int_{S}f(\sigma(y)xs\sigma(t))d\mu(t)d\mu(s)-\int_{S}\int_{S}f(xyst)d\mu(s)d\mu(t)=2f(x)\int_{S}f(yt)d\mu(t).\end{equation}
Replacing  $x$ by $xs$ in (\ref{eq555}) and integrating the result
obtained with respect to $s$ we get
\begin{equation}\label{elq4}
\int_{S}\int_{S}f(\sigma(y)xst)d\mu(t)d\mu(s)-\int_{S}\int_{S}f(xyst)d\mu(s)d\mu(t)=2f(y)\int_{S}f(xs)d\mu(s).\end{equation}
Subtracting these equations results in
\begin{equation}\label{elq5}
\int_{S}\int_{S}f(\sigma(y)xs\sigma(t))d\mu(t)d\mu(s)-\int_{S}\int_{S}f(\sigma(y)xst)d\mu(t)d\mu(s)
\end{equation}$$=2f(x)\int_{S}f(yt)d\mu(t)-2f(y)\int_{S}f(xs)d\mu(s).$$
Since from (\ref{eq555}) we have
$$\int_{S}\int_{S}f(\sigma(y)xs\sigma(t))d\mu(t)d\mu(s)-\int_{S}\int_{S}f(\sigma(y)xst)d\mu(t)d\mu(s)$$
$$=\int_{S}[\int_{S}f(\sigma(t)\sigma(y)xs)d\mu(s)-\int_{S}f(\sigma(y)xts)d\mu(s)]d\mu(t)$$
$$=\int_{S}2f(t)f(\sigma(y)x)d\mu(t)=2f(\sigma(y)x)\int_{S}f(t)d\mu(t).$$
Then we get
\begin{equation}\label{elq6}
f(x)\int_{S}f(yt)d\mu(t)-f(y)\int_{S}f(xs)d\mu(s)=f(\sigma(y)x)\int_{S}f(t)d\mu(t)\end{equation}
for all $x,y\in S.$ Since
$f(x)\int_{S}f(yt)d\mu(t)-f(y)\int_{S}f(xs)d\mu(s)=-[f(y)\int_{S}f(xt)d\mu(t)-f(x)\int_{S}f(ys)d\mu(s)]$,
then we obtain
$f(\sigma(y)x)\int_{S}f(t)d\mu(t)=-f(\sigma(x)y)\int_{S}f(t)d\mu(t)$.
Now, by using (\ref{eqo8}) we deduce (\ref{eqoo8}).\\
Equation (2.7): Replacing $x$ by $x\sigma(s)$ in (\ref{eq555}) and
integrating the result obtained with respect to $s$ we get
\begin{equation}\label{elq7}
  \int_{S}\int_{S}f(\sigma(y)xs\sigma(s)t)d\mu(s)d\mu(t)-\int_{S}\int_{S}f(xy\sigma(s)t)d\mu(s)d\mu(t)\end{equation}$$=2f(y)\int_{S}f(x\sigma(s))d\mu(s).
$$ By using (\ref{eqoo8}) we have
$f(x(y\sigma(s)t))=-f(\sigma(y\sigma(s)t)\sigma(x))=-f(\sigma(y)\sigma(x)\sigma(t)s)$
and then
$$\int_{S}\int_{S}f(xy\sigma(s)t)d\mu(s)d\mu(t)=-\int_{S}\int_{S}f(\sigma(y)\sigma(x)\sigma(s)t)d\mu(s)d\mu(t),$$
so equation (\ref{elq7}) can be written as follows
\begin{equation}\label{elq8}
  \int_{S}\int_{S}f(\sigma(y)xs\sigma(s)t)d\mu(s)d\mu(t)+\int_{S}\int_{S}f(\sigma(y)\sigma(x)\sigma(s)t)d\mu(s)d\mu(t)\end{equation}
  $$=2f(y)\int_{S}f(x\sigma(s)d\mu(s).
$$ The left hand side of (\ref{elq8}) is unchanged under
 interchange of $x$ and $\sigma(x)$, so since $f\neq0$ we get
 (\ref{elq10}). By using (\ref{eqoo8}) and (\ref{elq10})  we get
 $$\int_{S}f(xs)d\mu(s)=-\int_{S}f(\sigma(s)\sigma(x))d\mu(s)$$
 $$=-\int_{S}f(x\sigma(s))d\mu(s) =\int_{S}f(\sigma(x)s)d\mu(s).$$
 This proves (\ref{eqo11}).
 \\Equation (2.1): By replacing $x$ by $\sigma(x)$ in (\ref{eq555}) we
 obtain
\begin{equation}\label{omar10}
 \int_{S}f(\sigma(y)\sigma(x)t)d\mu(t)-\int_{S}f(\sigma(x)yt)d\mu(t)
 =2f(\sigma(x))f(y)\end{equation}
  If  $\sigma$ is an involutive automorphism then  from (\ref{eqo11}) we have
   $$\int_{S}f(\sigma(y)\sigma(x)t)d\mu(t)= \int_{S}f(\sigma(yx)t)d\mu(t)=\int_{S}f(yxt)d\mu(t)$$
   and it follows from (2.25) that
 $$\int_{S}f(yxt)d\mu(t)-\int_{S}f(\sigma(x)yt)d\mu(t)=2f(\sigma(x))f(y).$$
 Since
 $$\int_{S}f(yxt)d\mu(t)-\int_{S}f(\sigma(x)yt)d\mu(t)=-\bigg[\int_{S}f(\sigma(x)yt)d\mu(t)-\int_{S}f(yxt)d\mu(t)\bigg]=-2f(y)f(x),$$
  then we conclude that $$-2f(x)f(y)=2f(\sigma(x))f(y)$$ for all
 $x,y\in S$. Since $f\neq0$ then we get (\ref{eqo7}).\\ If   $\sigma$ is an involutive anti-automorphism
 then from (\ref{eqo11}) we have $$\int_{S}f(\sigma(y)\sigma(x)t)d\mu(t)=
  \int_{S}f(\sigma(xy)t)d\mu(t)=\int_{S}f(xyt)d\mu(t)$$ and
  $\int_{S}f(\sigma(x)yt)d\mu(t)=\int_{S}f(\sigma(y)xt)d\mu(t)$.
  Equation (\ref{omar10}) implies that
  $$\int_{S}f(xyt)d\mu(t)-\int_{S}f(\sigma(x)yt)d\mu(t)=2f(\sigma(x))f(y)=-\bigg[\int_{S}f(\sigma(x)yt)d\mu(t)-\int_{S}f(xyt)d\mu(t)\bigg]$$$$
  =-\bigg[\int_{S}f(\sigma(y)xt)d\mu(t)-\int_{S}f(xyt)d\mu(t)\bigg]=-2f(x)f(y).$$ Since $f\neq0$ then we get again
  (\ref{eqo7}).
 \\
 Equation (2.4): Putting $x=\sigma(s)$ in (\ref{eq555}), using (\ref{eqo7}) and integrating the result obtained with respect to $s$ we get by a computation that
  $$\int_{S}\int_{S}f(\sigma(y)\sigma(s)t)d\mu(s)d\mu(t)-\int_{S}\int_{S}f(\sigma(s)yt)d\mu(s)d\mu(t) =2f(y)\int_{S}f(\sigma(s))d\mu(s)$$$$=-2f(y)\int_{S}f(s)d\mu(s).$$
  Since
 $$\int_{S}\int_{S}f(\sigma(y)\sigma(s)t)d\mu(s)d\mu(t)=-\int_{S}\int_{S}f(ys\sigma(t))d\mu(s)d\mu(t),$$ then we get
 $$\int_{S}\int_{S}f(\sigma(s)yt)d\mu(s)d\mu(t)=f(y)\int_{S}f(s)d\mu(s)$$  for all $y\in S$, which proves (\ref{eqo9}).\\Equation (2.5):
 By using (\ref{eqo9}), replacing $y$ by $s$ in (\ref{eq555})  and
integrating the result obtained with respect to $s$ we
 get $$\int_{S}\int_{S}f(\sigma(s)xt)d\mu(s)d\mu(t)-\int_{S}\int_{S}f(xst)d\mu(s)d\mu(t)$$$$
 =2f(x)\int_{S}f(s)d\mu(s)=f(x)\int_{S}f(s)d\mu(s)-\int_{S}\int_{S}f(xst)d\mu(s)d\mu(t).$$
 Equation (2.8): By replacing $x$ by $s$ in (\ref{eqo11}) and integrating the result obtained with respect to $s$ we get
  $\int_{S}\int_{S} f(\sigma(s)t)d\mu(s)d\mu(t)=
  \int_{S}\int_{S}f(st)d\mu(s)d\mu(t).$ From (\ref{eqoo8}) we have
  $f(\sigma(s)t)=-f(\sigma(t)s)$ for all $s,t\in S$, then $$\int_{S}\int_{S} f(\sigma(s)t)d\mu(s)d\mu(t)=-\int_{S}\int_{S}
  f(\sigma(s)t)d\mu(s)d\mu(t).$$
 Which  implies  (\ref{omar100}) and this completes the proof.\end{proof}
\begin{lem} Let $f$: $S\longrightarrow \mathbb{C}$ be a non-zero solution of equation (\ref{eq555}).
 Then\\
 (1) The function defined by $$g(x)\;:=\frac{\int_{S}f(xt)d\mu(t)}{\int_{S}f(t)d\mu(t)}\;
\text{for} \;x\in S$$ is a non-zero  solution of the variant of
d'Alembert's functional equation (\ref{salma3}). Furthermore, $
\int_{S}g(s)d\mu(s)=0.$ (2) The function $g$ from (1) has the form
$g=\frac{\chi+\chi\circ \sigma}{2}$, where $\chi$ :
$S\longrightarrow \mathbb{C}$, $\chi\neq 0$, is a multiplicative
function.\end{lem}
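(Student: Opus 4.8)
The plan is to put all the substance into part~(1) and then obtain part~(2) by invoking the structure theorem for (\ref{salma3}) already recalled in the introduction. Throughout set $c:=\int_S f(t)\,d\mu(t)$, which is non-zero by (\ref{eqo8}); thus $g(x)=\frac{1}{c}\int_S f(xt)\,d\mu(t)$ is well defined, and dividing (\ref{eq555}) by $c$ gives at once
\begin{equation}\label{eqstartwo}
 g(\sigma(y)x)-g(xy)=\frac{2}{c}\,f(x)f(y),\qquad x,y\in S .
\end{equation}
Consequently, to prove that $g$ solves (\ref{salma3}) it suffices to evaluate $g(\sigma(y)x)$ by itself and check that $g(\sigma(y)x)=g(x)g(y)+\frac{1}{c}f(x)f(y)$; feeding this back into (\ref{eqstartwo}) then yields $g(xy)=g(x)g(y)-\frac{1}{c}f(x)f(y)$, and summing the two identities gives $g(xy)+g(\sigma(y)x)=2g(x)g(y)$.

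To obtain the formula for $g(\sigma(y)x)$ I would use identity (\ref{elq6}) from the preceding lemma, which holds under no extra assumption; dividing it by $c$ it reads $f(\sigma(y)x)=f(x)g(y)-f(y)g(x)$ for all $x,y\in S$. As the support of $\mu$ consists of central elements we have $\sigma(y)(xt)=(\sigma(y)x)t$, so replacing $x$ by $xt$ in this identity and integrating against $d\mu(t)$ gives
\[
 c\,g(\sigma(y)x)=g(y)\int_S f(xt)\,d\mu(t)-f(y)\int_S g(xt)\,d\mu(t).
\]
Here $\int_S f(xt)\,d\mu(t)=c\,g(x)$ by definition of $g$, and $\int_S g(xt)\,d\mu(t)=\frac{1}{c}\int_S\int_S f(xts)\,d\mu(t)\,d\mu(s)=-f(x)$ by (\ref{eqo10}). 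Hence $g(\sigma(y)x)=g(x)g(y)+\frac{1}{c}f(x)f(y)$, which together with (\ref{eqstartwo}) shows that $g$ satisfies (\ref{salma3}). Moreover $g$ is non-zero: were $g\equiv0$, the identity $g(xy)=g(x)g(y)-\frac{1}{c}f(x)f(y)$ would force $f(x)f(y)=0$ for all $x,y$, hence (taking $y=x$) $f\equiv0$, contrary to hypothesis. Finally $\int_S g(s)\,d\mu(s)=\frac{1}{c}\int_S\int_S f(st)\,d\mu(t)\,d\mu(s)=0$ by (\ref{omar100}).

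For part~(2) there is essentially nothing left to do: $g$ is a non-zero solution of the variant of d'Alembert's equation (\ref{salma3}), so Stetk\ae r's structure theorem (\cite{st4}, see also \cite{wilson5}), quoted in the introduction, gives $g=\frac{\chi+\chi\circ\sigma}{2}$ with $\chi:S\to\mathbb{C}$ multiplicative and $\chi\neq0$ (the non-triviality of $\chi$ coming from $g\neq0$). When $\sigma$ is an involutive anti-automorphism rather than an automorphism, $\chi\circ\sigma$ is still multiplicative since $\mathbb{C}$ is commutative, and a one-line substitution confirms that $\tfrac{1}{2}(\chi+\chi\circ\sigma)$ again solves (\ref{salma3}); the converse in that case is covered by the same references.

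The only real work is the separate evaluation of $g(\sigma(y)x)$, i.e.\ the ``splitting'' of the difference in (\ref{eqstartwo}). That step relies on having precisely the right auxiliary relation from the preceding lemma --- namely (\ref{elq6}), which expresses $f(\sigma(y)x)$ through $f$ and $g$ --- and on the evaluation $\int_S g(xt)\,d\mu(t)=-f(x)$, which itself rests on the double-integral formula (\ref{eqo10}). Keeping the signs straight and making sure the substitution $x\mapsto xt$ interacts correctly with $\sigma$ (uniformly in the automorphism and the anti-automorphism case, since only centrality of the support of $\mu$ is used) is the one point that needs care; everything afterwards is routine algebra.
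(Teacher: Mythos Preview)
Your argument for part~(1) is correct but genuinely different from the paper's. The paper computes $c^{2}\bigl[g(xy)+g(\sigma(y)x)\bigr]$ in one block via a triple integral, using (\ref{eqo9}) and (\ref{eqo10}) to rewrite $c\int_S f(\sigma(y)xs)\,d\mu(s)$ and $c\int_S f(xys)\,d\mu(s)$ and then collapsing the result with (\ref{eq555}). You instead split the two terms: the difference $g(\sigma(y)x)-g(xy)$ comes for free from (\ref{eq555}), and you evaluate $g(\sigma(y)x)$ separately by integrating the identity $f(\sigma(y)x)=f(x)g(y)-f(y)g(x)$ (i.e.\ (\ref{elq6}), which appears inside the proof of Lemma~2.1 rather than in its statement). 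Your route has the pleasant by-product of producing the individual formulas $g(xy)=g(x)g(y)-\tfrac{1}{c}f(x)f(y)$ and $g(\sigma(y)x)=g(x)g(y)+\tfrac{1}{c}f(x)f(y)$.

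For part~(2) your citation is a bit thin in the anti-automorphism case: the result of \cite{st4} quoted in the introduction is stated for involutive \emph{automorphisms}, and your claim that ``the converse in that case is covered by the same references'' is not substantiated. The paper closes this gap by deriving the sine addition law $f(xy)=f(x)g(y)+f(y)g(x)$ (adding the two identities (\ref{haj2}) and (\ref{haj4})), invoking \cite[Lemma~3.4]{ebanks} to conclude that $g$ is abelian, and only then applying \cite[Theorem~9.21]{07}. In fact your own formula $g(xy)=g(x)g(y)-\tfrac{1}{c}f(x)f(y)$ already shows $g(xy)=g(yx)$ by symmetry of the right-hand side, so you have the abelian property for free; stating this explicitly and then citing \cite[Theorem~9.21]{07} would make your part~(2) watertight in both the automorphism and anti-automorphism cases.
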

\begin{proof}
 (1)  From (\ref{eqo9}), (\ref{eqo10}), (\ref{eq555}) and the definition of $g$ we have $$\bigg(\int_{S}f(t)d\mu(t)\bigg)^{2}[g(xy)+g(\sigma(y)x)]=\int_{S}f(t)d\mu(t) \int_{S}f(\sigma(y)xs)d\mu(s)+\int_{S}f(t)d\mu(t) \int_{S}f(xys)d\mu(s)$$
 $$=\int_{S}\int_{S}\int_{S}f(\sigma(y)xs\sigma(t)k)d\mu(s)d\mu(t)d\mu(k)-\int_{S}\int_{S}\int_{S}f(xystk)d\mu(s)d\mu(t)d\mu(k)$$$$=
  \int_{S}\int_{S}\int_{S}f(\sigma(yt)(xs)k)d\mu(s)d\mu(t)d\mu(k)-\int_{S}\int_{S}\int_{S}f((xs)(yt)k)d\mu(s)d\mu(t)d\mu(k)$$
  $$=2\int_{S}f(xs)d\mu(s)\int_{S}f(yt)d\mu(t).$$
 Dividing by $(\int_{S}f(t)d\mu(t))^{2}$ we get $g$ satisfies the variant of d'Alembert's functional equation (\ref{salma3}).\\
 From (\ref{eqo10}) and the definition of $g$ we get  $$\int_{S}\int_{S}g(ts)d\mu(t)d\mu(s)=\frac{\int_{S}\int_{S}\int_{S}f(s'ts)d\mu(t)d\mu(s)d\mu(s')}{\int_{S}f(s)d\mu(s)}$$
 $$=\frac{-\int_{S}f(s')d\mu(s')\int_{S}f(s)d\mu(s)}{\int_{S}f(s)d\mu(s)}=-\int_{S}f(s)d\mu(s)\neq 0.$$
 From (\ref{omar100}) and the definition of $g$ we get $$\int_{S}g(s)d\mu(s)=\frac{\int_{S}f(st)d\mu(s)d\mu(t)
 }{\int_{S}f(s)d\mu(s)}=\frac{0
 }{\int_{S}f(s)d\mu(s)}=0$$
Furthermore, $\int_{S}\int_{S}g(st)d\mu(t)d\mu(s)\neq 0$, so  $g$ is
non-zero   solution of equation (\ref{salma3}).  \\(2) Let $f$ be a
non-zero solution of (\ref{eq555}). Replacing $x$ by $xs$ in
(\ref{eq555}) and integrating the result obtained with respect to
$s$ we get
\begin{equation}\label{haj1}
\int_{S}\int_{S}f(\sigma(y)xst)d\mu(s)d\mu(t)-\int_{S}\int_{S}f(xyst)d\mu(s)d\mu(t)=2f(y)\int_{S}f(xs)d\mu(s).\end{equation}
By using  (\ref{eqo9}), (\ref{eqo10}) equation (\ref{haj1}) can be
written as follows
\begin{equation}\label{haj2}
-f(\sigma(y)x)+f(xy)=2f(y)g(x),\;x,y\in S,\end{equation} where $g$
is the function defined above. If we replace $y$ by $ys$ in
(\ref{eq555}) and integrating the result obtained with respect to
$s$ we get
\begin{equation}\label{haj3}
\int_{S}\int_{S}f(\sigma(y)x\sigma(s)t)d\mu(s)d\mu(t)-\int_{S}\int_{S}f(xyst)d\mu(s)d\mu(t)=2f(x)\int_{S}f(ys)d\mu(s).\end{equation}
By using  (\ref{eqo9}), (\ref{eqo10}) we obtain
\begin{equation}\label{haj4}
f(\sigma(y)x)+f(xy)=2f(x)g(y),\;x,y\in S.\end{equation} By adding
(\ref{haj4}) and  (\ref{haj2}) we get that the pair$f,g$ satisfies
the sine addition law $$f(xy)=f(x)g(y)+f(y)g(x)\; \text{for all }\;
x,y\in S.$$ Now, in view of [\cite{ebanks}, Lemma 3.4.] $g$ is
abelian. Since $g$ is a non-zero solution of d'Alembert's functional
equation (\ref{salma3}) then  from [\cite{07}, Theorem 9.21] there
exists a non-zero multiplicative function $\chi$: $S\longrightarrow
\mathbb{C}$ such that $g=\frac{\chi+\chi\circ\sigma}{2}$. This
completes the proof.
\end{proof}
 Now we are ready to prove the  main result of the present section.
 \begin{thm} The non-zero solutions $f$ : $S\longrightarrow \mathbb{C}$ of
the functional equation (\ref{eq555}) are the functions of the form
\begin{equation}\label{elq300}
    f=\frac{\chi\circ\sigma-\chi}{2}\int_{S}\chi(t)d\mu(t),
\end{equation} where $\chi$ : $S\longrightarrow \mathbb{C}$ is a
multiplicative function such that $\int_{S}\chi(t)d\mu(t)\neq 0$ and
$\int_{S}\chi(\sigma(t))d\mu(t)=-\int_{S}\chi(t)d\mu(t)$.
\\If $S$ is a
topological semigroup and that $\sigma$ : $S\longrightarrow S$ is
continuous then the non-zero solution $f$ of equation (\ref{eq555})
is continuous, if and only if $\chi$ is continuous.
\end{thm}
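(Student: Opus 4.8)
The plan is to exploit the two lemmas already proved: Lemma~2.2 gives a non-zero multiplicative $\chi$ with $g=\tfrac{\chi+\chi\circ\sigma}{2}$, where $g(x)=\big(\int_S f(xt)\,d\mu(t)\big)\big/\big(\int_S f(t)\,d\mu(t)\big)$, and equations (2.25)--(2.27) (the relations \eqref{haj2}, \eqref{haj4}, and the sine addition law) relate $f$ to $g$. First I would recall from the proof of Lemma~2.2 that subtracting \eqref{haj2} from \eqref{haj4} yields $f(\sigma(y)x)=f(x)g(y)-f(y)g(x)$ for all $x,y\in S$; since $g$ is abelian and $f,g$ satisfy the sine addition law $f(xy)=f(x)g(y)+f(y)g(x)$, standard results on the sine addition law (e.g. \cite{ebanks}) tell us that any non-zero such $f$ has the form $f=c\,(\chi\circ\sigma-\chi)$ for some constant $c$, OR more directly one can proceed by hand: set $a:=\int_S\chi(t)\,d\mu(t)$ and $b:=\int_S\chi(\sigma(t))\,d\mu(t)$, and compute $\int_S g(xt)\,d\mu(t)=\tfrac12\big(a\,\chi(x)+b\,\chi(\sigma(x))\big)$ using multiplicativity of $\chi$ and centrality of the $z_i$.

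Next I would nail down the relation between $f$ and $\chi$. The cleanest route: from \eqref{haj4} we have $f(xy)+f(\sigma(y)x)=2f(x)g(y)$, and comparing with the sine addition law gives $f(\sigma(y)x)=f(x)g(y)-f(y)g(x)$. Putting $y=x$ and using \eqref{eqo7} (i.e. $f(x)=-f(\sigma(x))$) together with $f(\sigma(x)x)=-f(\sigma(x)\sigma(x))$ one extracts a scalar multiple; more efficiently, observe that the sine addition law plus the formula $g=\tfrac{\chi+\chi\circ\sigma}{2}$ forces, via the classical classification, $f=\lambda(\chi-\chi\circ\sigma)$ or $f=\lambda(\chi\circ\sigma-\chi)$ for a scalar $\lambda\in\mathbb C^\times$ (the two multiplicative functions appearing in $g$ are $\chi$ and $\chi\circ\sigma$, and $f$ must be a linear combination of them that is $\sigma$-odd by \eqref{eqo7}). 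Then I determine $\lambda$ by substituting back into the definition of $g$: since $g(x)\cdot\int_S f(t)\,d\mu(t)=\int_S f(xt)\,d\mu(t)$, plugging $f=\lambda(\chi\circ\sigma-\chi)$ gives $\int_S f(xt)\,d\mu(t)=\lambda\big(\chi(\sigma(x))b-\chi(x)a\big)$ and $\int_S f(t)\,d\mu(t)=\lambda(b-a)$, while $g(x)=\tfrac12(\chi(x)+\chi(\sigma(x)))$; consistency of these two expressions of $g(x)\int_S f\,d\mu$ for all $x$ forces $b=-a$ (using that $\chi,\chi\circ\sigma$ are linearly independent unless $\chi=\chi\circ\sigma$, a degenerate case one checks separately gives $f=0$) and then $\int_S f(t)\,d\mu(t)=-2\lambda a$. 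Feeding $b=-a$ back, one gets $\int_S f(xt)\,d\mu(t)=\lambda a(\chi(\sigma(x))+\chi(x))=\tfrac12 g(x)\cdot(2\lambda a)\cdot 2$... matching constants pins $\lambda$ so that $f=\tfrac{\chi\circ\sigma-\chi}{2}\int_S\chi(t)\,d\mu(t)$, which is \eqref{elq300}; the conditions $a\neq0$ and $b=-a$ are exactly \eqref{eqo8} translated (via \eqref{eqo11}, \eqref{elq10}) and the stated constraints.

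For the converse, I would simply take $f$ of the form \eqref{elq300} with $\chi$ multiplicative, $a:=\int_S\chi(t)\,d\mu(t)\neq0$, $\int_S\chi(\sigma(t))\,d\mu(t)=-a$, and verify \eqref{eq555} directly: using centrality of the $z_i$ and multiplicativity, $\int_S f(\sigma(y)xt)\,d\mu(t)-\int_S f(xyt)\,d\mu(t)$ expands into four terms in $\chi$ and $\chi\circ\sigma$ evaluated at products, which collapse — with $\sigma$ a morphism so that $\chi\circ\sigma$ is again multiplicative (or anti-multiplicative, but on the scalar level this is harmless) — to $2f(x)f(y)$; this is a routine but slightly tedious bookkeeping exercise splitting into the automorphism and anti-automorphism cases for $\sigma$. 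Finally, the continuity statement is immediate: if $\chi$ is continuous then $f$ in \eqref{elq300} is manifestly continuous; conversely, if $f$ is continuous and non-zero, then $g$ (a quotient of an integral of continuous functions by a nonzero constant) is continuous, and $\chi=g+\tfrac{f}{\,\frac12\int_S\chi(t)d\mu(t)\,}$ — more precisely $\chi=g-\dfrac{f}{\int_S\chi(t)\,d\mu(t)}$ from \eqref{elq300} and $g=\tfrac{\chi+\chi\circ\sigma}{2}$ — is continuous as well.

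I expect the main obstacle to be the step that pins down the scalar $\lambda$ and the sign relation $b=-a$ cleanly while handling the degenerate case $\chi=\chi\circ\sigma$ (where $g=\chi$ and the na\"ive ansatz for $f$ collapses): one must argue there that $\int_S\chi\,d\mu=-\int_S\chi\circ\sigma\,d\mu$ together with $\chi=\chi\circ\sigma$ gives $\int_S\chi\,d\mu=0$, contradicting \eqref{eqo8}, so this case cannot occur for a non-zero $f$. The other mild subtlety is making sure the anti-automorphism case for $\sigma$ is compatible with $\chi\circ\sigma$ appearing in the final formula — here one uses that in that case \eqref{eqoo8} and the structure already force the relevant reductions, and the formula \eqref{elq300} is stated symmetrically enough to cover it.
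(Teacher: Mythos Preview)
Your overall strategy---extract $f=\lambda(\chi\circ\sigma-\chi)$ from the sine addition law and then pin the constants---is sound in principle, but it differs from the paper's route and has a genuine gap exactly where you anticipate trouble.

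\textbf{How the paper proceeds.} The paper does not go through the sine-addition classification at all. It sets $y=s$ in \eqref{eq555} and integrates, which (via the definition of $g$) gives directly
\[
f(x)=\tfrac12\Bigl(\int_S g(\sigma(s)x)\,d\mu(s)-\int_S g(xs)\,d\mu(s)\Bigr).
\]
Substituting $g=\tfrac{\chi+\chi\circ\sigma}{2}$ yields $f=\dfrac{a-b}{2}\cdot\dfrac{\chi\circ\sigma-\chi}{2}$ with $a=\int_S\chi\,d\mu$ and $b=\int_S\chi\circ\sigma\,d\mu$, so the scalar is explicit from the outset. The relation $b=-a$ is then obtained by inserting this $f$ into \eqref{eqo11} and using $\chi\neq\chi\circ\sigma$. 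For continuity the paper simply cites \cite[Theorem~3.18(d)]{07}, whereas your explicit argument $\chi=g-f\big/\!\int_S\chi\,d\mu$ is also fine.

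\textbf{The gap in your argument.} You try to determine $\lambda$ by substituting $f=\lambda(\chi\circ\sigma-\chi)$ into the defining relation $g(x)\int_S f\,d\mu=\int_S f(xt)\,d\mu(t)$. This relation does force $b=-a$ (comparing coefficients of $\chi$ and $\chi\circ\sigma$), but once $b=-a$ both sides become $-2\lambda a\,g(x)$ identically, so \emph{no} constraint on $\lambda$ emerges: the ratio defining $g$ is homogeneous of degree zero in $f$ and cannot distinguish $\lambda(\chi\circ\sigma-\chi)$ from $\lambda'(\chi\circ\sigma-\chi)$. Your line ``matching constants pins $\lambda$'' is therefore circular. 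The identities \eqref{haj2}--\eqref{haj4} and the sine addition law are likewise homogeneous in $f$, so they cannot help either.

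\textbf{How to close it within your framework.} You need one input that is not scale-invariant in $f$. Two options: (i)~substitute $f=\lambda(\chi\circ\sigma-\chi)$ with $b=-a$ directly into \eqref{eq555}; the left side computes to $(a/\lambda)\,f(x)f(y)$, forcing $\lambda=a/2$; or (ii)~use \eqref{eqo10} from Lemma~2.1, which with the same ansatz gives $a^2 f(x)=2\lambda a\,f(x)$ and hence $\lambda=a/2$. Either repair yields \eqref{elq300}. Your handling of the degenerate case $\chi=\chi\circ\sigma$ and of the converse verification is correct.
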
\begin{proof} Simple computations show that $f$ defined by (\ref{elq300}) is a solution of (\ref{eq555}). Conversely,  let $f$ : $S\longrightarrow \mathbb{C}$ be a
non-zero solution of the functional equation (\ref{eq555}). Putting
$y=s$ in (\ref{eq555}) and integrating the result obtained with
respect to $s$ we get
\begin{equation}\label{elqorachi1222}
f(x)=\frac{\int_{S}\int_{S}f(\sigma(s)xt)d\mu(s)d\mu(t)-\int_{S}\int_{S}\int_{S}f(xst)d\mu(s)d\mu(t)}{2\int_{S}f(s)d\mu(s)}\end{equation}
$$=\frac{1}{2}( \int_{S}g(\sigma(s)x)d\mu(s)-\int_{S}g(xs)d\mu(s)),$$
 where g is the function defined  by $g=\frac{\chi+\chi\circ \sigma}{2}$,
and where $\chi$ : $S\longrightarrow \mathbb{C}$, $\chi\neq 0$ is a
multiplicative function. Substituting this into
(\ref{elqorachi1222}) we find that $f$ has the form
\begin{equation}\label{equo1àà}
    f=\frac{\int_{S}\chi(s)d\mu(s)-\int_{S}\chi(\sigma(s))d\mu(s)}{2}\frac{ \chi\circ\sigma-\chi}{2}.
\end{equation}Furthermore, from (\ref{eqo11})  $f$ satisfies
$ \int_{S}f(\sigma(x)s)d\mu(s)=\int_{S}f(xs)d\mu(s)$ for all $x\in
S$. By applying the last expression of $f$ in  (\ref{eqo11}) we get
after computations that
$$\bigg[\int_{S}\chi(\sigma(t))d\mu(t)+\int_{S}\chi(t)d\mu(t)\bigg][\chi-\chi\circ\sigma]=0.$$ Since $\chi\neq   \chi\circ\sigma$, we
obtain
 $\int_{S}\chi(\sigma(t))d\mu(t)+\int_{S}\chi(t)d\mu(t)=0$ and  then from (\ref{equo1àà})
 we have
$$f=\frac{ \chi\circ\sigma-\chi}{2}\int_{S}\chi(t)d\mu(t).$$
For the topological statement we use [\cite{07}, Theorem 3.18(d)].
This completes the proof. \end{proof} If  $\mu=\delta_{z_0}$ where
$z_0$ is a fixed element of the center of $S$ we get the following
particular
 result obtained by Bouikhalene and Elqorachi on monoids \cite{elq-boui}.
\begin{cor}  \cite{elq-boui} Let $M$ be a monoid, let $\sigma$: $M\longrightarrow M$ be an involutive automorphism. The non-zero solutions $f$ : $S\longrightarrow \mathbb{C}$ of
the functional equation
$$f(\sigma(y)xz_0)-f(xyz_0)=2f(x)f(y),\; x,y\in M$$ are the functions of
the form
\begin{equation} f=\chi(z_0)\frac{\chi\circ\sigma-\chi}{2},
\end{equation}where $\chi$ : $M\longrightarrow \mathbb{C}$ is a
multiplicative function such that $\chi(z_0)\neq 0$ \\and
$\chi(\sigma(z_0))=-\chi(z_0)$.
\end{cor}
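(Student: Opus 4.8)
The plan is to derive this statement directly from the main theorem of this section by specializing the measure to the single Dirac mass $\mu=\delta_{z_0}$. First I would check that the hypotheses of the theorem are met in the present setting: a monoid is in particular a semigroup, the involutive automorphism $\sigma$ is an admissible involutive morphism, and $\mu=\delta_{z_0}$ is trivially a finite linear combination of Dirac measures whose support $\{z_0\}$ lies in the center of $M$ by assumption. Hence the theorem applies verbatim, and it remains only to rewrite its conclusion in terms of $z_0$.

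Next I would translate each integral using the evaluation property $\int_{M}h(t)\,d\delta_{z_0}(t)=h(z_0)$, valid for every function $h:M\to\mathbb{C}$. Applied to equation (\ref{eq555}), this collapses the two integrals to point evaluations and recovers exactly the functional equation $f(\sigma(y)xz_0)-f(xyz_0)=2f(x)f(y)$ of the corollary, so the two equations are literally identical. Applied to the two scalar side-conditions on $\chi$, it turns $\int_{M}\chi(t)\,d\mu(t)\neq 0$ into $\chi(z_0)\neq 0$, and $\int_{M}\chi(\sigma(t))\,d\mu(t)=-\int_{M}\chi(t)\,d\mu(t)$ into $\chi(\sigma(z_0))=-\chi(z_0)$.

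Finally I would substitute $\mu=\delta_{z_0}$ into the solution formula (\ref{elq300}). Since $\int_{M}\chi(t)\,d\mu(t)=\chi(z_0)$, the expression $f=\frac{\chi\circ\sigma-\chi}{2}\int_{M}\chi(t)\,d\mu(t)$ becomes $f=\chi(z_0)\frac{\chi\circ\sigma-\chi}{2}$, which is precisely the asserted form, subject to the two translated conditions $\chi(z_0)\neq 0$ and $\chi(\sigma(z_0))=-\chi(z_0)$. The converse direction---that every such $f$ solves the equation---is inherited from the converse already proved in the theorem.

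There is no genuine obstacle here: the corollary is a pure specialization and the argument is entirely mechanical. The only point worth a moment's attention is the bookkeeping ensuring that $\sigma(z_0)$ is again central, so that every manipulation behind the theorem remains legitimate after the specialization; this is exactly the standing convention recorded at the end of the introduction and needs no separate verification.
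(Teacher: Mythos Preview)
Your proposal is correct and matches the paper's own treatment exactly: the paper simply remarks that setting $\mu=\delta_{z_0}$ with $z_0$ central recovers the corollary from Theorem~2.3, without spelling out any further details. Your write-up is in fact more explicit than the paper itself, which gives no proof beyond that one sentence of specialization.
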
 \section{The complex-valued  continuous solutions of equation  (\ref{elqorachi2}) on a topological semigroup.}
The following lemma will be used to construct some particular
solutions of equation (\ref{elqorachi2}). \begin{lem} Let $S$ be a
locally compact semigroup, let  $\upsilon$ be  a complex measure
with compact support. The continuous solutions of the functional
equation
\begin{equation}\label{elqorachi3}
\int_{S}f(xyt)d\upsilon(t)=f(x)f(y),\;x,y\in S
\end{equation} are the functions
\begin{equation}\label{elqorachi4}
    f=\chi\int_{S}\chi(t)d\upsilon(t),
\end{equation} where $\chi$: $S\longrightarrow \mathbb{C}$ is a
continuous  multiplicative function on $S$.\end{lem}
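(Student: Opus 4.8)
The plan is to show that every continuous solution is a constant multiple of a continuous multiplicative function and then to identify that constant as $\int_{S}\chi\,d\upsilon$. The ``if'' direction I would dispatch by direct substitution: if $\chi$ is continuous and multiplicative and $f=\chi\int_{S}\chi(t)\,d\upsilon(t)$, then $\int_{S}f(xyt)\,d\upsilon(t)=\chi(x)\chi(y)\bigl(\int_{S}\chi(t)\,d\upsilon(t)\bigr)^{2}=f(x)f(y)$, so such $f$ solves (\ref{elqorachi3}).

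For the converse, let $f$ be a continuous solution, which we may take nonzero (the zero function being of the stated form, with $\chi=0$). I would set $F(x):=\int_{S}f(xt)\,d\upsilon(t)$, which is continuous since $f$ is continuous and $\upsilon$ has compact support, so that (\ref{elqorachi3}) becomes $F(xy)=f(x)f(y)$ for all $x,y\in S$. The heart of the argument is a ``shift identity'': applying (\ref{elqorachi3}) to the pair $(xt,y)$ gives $f(xt)f(y)=\int_{S}f(xtys)\,d\upsilon(s)=F(xty)$, while applying it to the pair $(x,ty)$ gives $F(xty)=F\bigl(x(ty)\bigr)=f(x)f(ty)$; hence $f(xt)f(y)=f(x)f(ty)$ for all $x,y,t\in S$. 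Integrating in $t$ against $\upsilon$ then yields
$$F(x)f(y)=f(x)\int_{S}f(ty)\,d\upsilon(t),\qquad x,y\in S.$$
Fixing $x_{0}$ with $f(x_{0})\neq0$ and writing $c:=F(x_{0})/f(x_{0})$, the displayed identity at $x=x_{0}$ forces $\int_{S}f(ty)\,d\upsilon(t)=cf(y)$ for all $y$; reinserting this gives $\bigl(F(x)-cf(x)\bigr)f(y)=0$ for all $x,y$, and choosing $y=x_{0}$ gives $F=cf$.

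From $cf(xy)=F(xy)=f(x)f(y)$ I would note that $c\neq0$ (otherwise $f\equiv0$), so $\chi:=f/c$ is a nonzero continuous multiplicative function with $f=c\chi$. Substituting $f=c\chi$ back into $F=cf$ (or directly into (\ref{elqorachi3})) and cancelling $c\chi(x)$ identifies $c=\int_{S}\chi(t)\,d\upsilon(t)$, whence $f=\chi\int_{S}\chi(t)\,d\upsilon(t)$, as claimed. Continuity of $f$ is then equivalent to continuity of $\chi$ since $\chi=f/c$.

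I expect the one delicate point to be the shift identity, where associativity of $S$ must be used to rewrite $f(xt)f(y)$ first as $F(xty)$ and then as $f(x)f(ty)$; after that the argument is pure bookkeeping, and there are no measure-theoretic obstacles because all integrands are continuous and $\upsilon$ has compact support. I would also double-check the degenerate case $f\equiv0$ against the paper's convention for ``multiplicative function''.
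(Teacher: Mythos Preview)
Your argument is correct. The route differs from the paper's in its organization: the paper first proves $\int_{S}f(s)\,d\upsilon(s)\neq 0$ by a separate contradiction argument involving double and triple integrations, then establishes the middle-variable identity $\int_{S}f(xty)\,d\upsilon(t)=f(x)f(y)$, and only afterwards defines $\chi(x)=\int_{S}f(xt)\,d\upsilon(t)\big/\int_{S}f(t)\,d\upsilon(t)$ and verifies multiplicativity via a triple-integral computation. You instead extract the pointwise shift identity $f(xt)f(y)=f(x)f(ty)$ directly from two applications of the functional equation and associativity, integrate once to obtain $F=cf$, and read off multiplicativity of $\chi=f/c$ immediately from $F(xy)=f(x)f(y)$. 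Your path is more economical: the nonvanishing of $c$ falls out automatically, and no triple integrals appear. The paper's approach, on the other hand, makes the $\upsilon$-spherical identity $\int_{S}f(xty)\,d\upsilon(t)=f(x)f(y)$ explicit, which is convenient for the downstream use of this lemma in Theorem~3.2; your pointwise identity is actually stronger and would yield that integrated form as well. The degenerate case $f\equiv 0$ is handled consistently with the paper's convention, which does not exclude $\chi=0$.
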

\begin{proof}
We use in the proof similar Stetk\ae r's computations [\cite{stkan},
Proposition 16] used for the special case of
$\upsilon=\delta_{s_{0}}$ , where $s_{0}\in S$. Let $f$ be a
continuous solution of (\ref{elqorachi3}). Replacing $y$ by $s$ in
(\ref{elqorachi3}) and integrating the result obtained with respect
to $s$ we get
\begin{equation}\label{elqorachi5}
   \int_{S} \int_{S}f(xst)d\upsilon(s)d\upsilon(t)=f(x)\int_{S}f(s)d\upsilon(s),\;x\in
    S.
\end{equation}
Assume that $f\neq 0$, we will show that
$\int_{S}f(s)d\upsilon(s)\neq 0$. By replacing $x$ by $xs$,  $y$ by
$yk$ in (\ref{elqorachi3}) and integrating the result obtained with
respect to $s$ and $k$ we get
$\int_{S}\int_{S}\int_{S}f(xsykt)d\upsilon(s)d\upsilon(k)d\upsilon(t)=\int_{S}f(xs)d\upsilon(s)\int_{S}f(yk)d\upsilon(k)$.
On the other hand, from (\ref{elqorachi5}) we have
$$\int_{S}\int_{S}\int_{S}f(xsykt)d\upsilon(s)d\upsilon(k)d\upsilon(t)=\int_{S}f(xsy)d\upsilon(s)\int_{S}f(t)d\upsilon(t).$$
So, if $\int_{S}f(s)d\upsilon(s)=0$ then we get
$\int_{S}f(xs)d\upsilon(s)\int_{S}f(yk)d\upsilon(k)=0$ for all
$x,y\in S$ and it follows that
$\int_{S}f(xys)d\upsilon(s)=0=f(x)f(y) $ for all $x,y\in S$. Which
contradicts the assumption that $f\neq 0$.\\ From (\ref{elqorachi3})
and (3.3) we have
$$\int_{S}f(s)d\upsilon(s)\int_{S}f(xty)d\upsilon(t)=\int_{S}\int_{S}f(xtysk)d\upsilon(t)d\upsilon(s)d\upsilon(k)$$
$$=f(x)\int_{S}\int_{S}f(tys)d\upsilon(t)d\upsilon(s)=f(x)f(y)\int_{S}f(t)d\upsilon(t).$$
This implies that
\begin{equation}\label{elqorachi6}
\int_{S}f(xty)d\upsilon(t)=f(x)f(y)
\end{equation}for all $x,y\in S.$ Now, let
$$\chi(x)=\frac{\int_{S}f(xt)d\upsilon(t)}{\int_{S}f(t)d\upsilon(t)},\; x\in
S.$$ In view of  (\ref{elqorachi3}), (3.3) and (\ref{elqorachi6}) we
have
$$\bigg(\int_{S}f(t)d\upsilon(t)\bigg)^{2}\chi(x)\chi(y)=\int_{S}f(xt)d\upsilon(t)\int_{S}f(ys)d\upsilon(s)$$
$$=\int_{S}\int_{S}[f(xt)f(ys)]d\upsilon(t)d\upsilon(s)=\int_{S}\int_{S}[\int_{S}f(xtysk)d\upsilon(k)]d\upsilon(t)d\upsilon(s)$$
$$=\int_{S}f(k)d\upsilon(k)\int_{S}f(xty)d\upsilon(t)=\int_{S}f(k)d\upsilon(k)\int_{S}f(xyt)d\upsilon(t)$$
$$=\bigg(\int_{S}f(k)d\upsilon(k)\bigg)^{2}\frac{\int_{S}f(xyt)d\upsilon(t)}{\int_{S}f(t)d\upsilon(t)}=\bigg(\int_{S}f(k)d\upsilon(k)\bigg)^{2}\chi(xy).$$
 Which proves that $\chi$
is a multiplicative function. Finally, from (\ref{elqorachi5}) we
have $$f(x)=\frac{ \int_{S}f(xst)d\upsilon(s)d\upsilon(t)}{
\int_{S}f(t)d\upsilon(t)}=\int_{S}\chi(xs)d\upsilon(s)=\chi(x)\int_{S}\chi(s)d\upsilon(s).$$
This completes the proof.
\end{proof}The  continuous solutions of (\ref{elqorachi2}) are described in
the following theorem.
\begin{thm} Let $S$ be a locally compact  semigroup, let
$\sigma$ : $S\longrightarrow S$ be a continuous involutive
automorphism of $S$, and let  $\upsilon$ be a Borel complex measure
with compact support and which is $\sigma$-invariant. The continuous
solutions of the functional equation (\ref{elqorachi2})
 are the functions
\begin{equation}\label{elqorachi7}
    f=\frac{\psi+\psi\circ\sigma}{2},
\end{equation} where $\psi$: $S\longrightarrow \mathbb{C}$ is a
continuous  $\upsilon$-spherical function. That is
$\int_{S}\psi(xsy)d\upsilon(s)=\psi(x)\psi(y)$ for all $x,y\in S.$
\end{thm}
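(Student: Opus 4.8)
The plan is to prove the two inclusions separately, following closely the pattern of Section~2 (an intermediate $g$ solving (\ref{salma3}), classify, then recover $f$). For the ``if'' direction I would just substitute $f=\frac{\psi+\psi\circ\sigma}{2}$ into the left side of (\ref{elqorachi2}). Since $\sigma$ is an automorphism, $\sigma(xty)=\sigma(x)\sigma(t)\sigma(y)$, and since $\upsilon$ is $\sigma$-invariant one may replace $\sigma(t)$ by $t$ under the integral sign; hence $\int_{S}(\psi\circ\sigma)(xty)\,d\upsilon(t)=\int_{S}\psi(\sigma(x)t\sigma(y))\,d\upsilon(t)=\psi(\sigma(x))\psi(\sigma(y))$, and similarly $\int_{S}\psi(\sigma(y)tx)\,d\upsilon(t)=\psi(\sigma(y))\psi(x)$ and $\int_{S}(\psi\circ\sigma)(\sigma(y)tx)\,d\upsilon(t)=\psi(y)\psi(\sigma(x))$. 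Adding the four contributions, the left side of (\ref{elqorachi2}) collapses to $\frac12[\psi(x)+\psi(\sigma(x))]\,[\psi(y)+\psi(\sigma(y))]=2f(x)f(y)$, and continuity of $\psi$ together with continuity of $\sigma$ makes $f$ continuous. This direction is routine.

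For the converse, let $f$ be a continuous solution of (\ref{elqorachi2}); the case $f=0$ corresponds to $\psi=0$, so assume $f\neq0$. The scheme mirrors Lemmas~2.1 and~2.2 and Theorem~2.3, with (\ref{elqorachi2}) playing the role of the Van Vleck-type equation (\ref{eq555}) and with the same variant of d'Alembert's equation (\ref{salma3}) as the intermediate target. First I would derive the auxiliary identities one needs by replacing $x$ and $y$ in turn by $xs,\;ys,\;\sigma(x),\;\sigma(y)$ in (\ref{elqorachi2}), integrating the resulting equations against $d\upsilon(s)$, and using $\sigma(uv)=\sigma(u)\sigma(v)$ together with the $\sigma$-invariance of $\upsilon$ to realign the multiple integrals; in particular I expect to obtain, by an argument of the type used to prove (\ref{eqo8}), that $\int_{S}f(t)\,d\upsilon(t)\neq0$ (the vanishing case forcing $f=0$).

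Then I would set $g(x):=\int_{S}f(xt)\,d\upsilon(t)\big/\int_{S}f(t)\,d\upsilon(t)$ and show, by a triple-integral computation exactly parallel to part~(1) of Lemma~2.2, that $g$ is a non-zero continuous solution of (\ref{salma3}) and that the pair $(f,g)$ satisfies a mixed Wilson/sine-type relation forcing (via [\cite{ebanks}, Lemma~3.4]) $g$ to be abelian; hence by \cite{st4} (equivalently [\cite{07}, Theorem~9.21]) $g=\frac{\chi+\chi\circ\sigma}{2}$ for some non-zero multiplicative $\chi\colon S\to\mathbb{C}$. Finally, putting $y=s$ in (\ref{elqorachi2}) and integrating against $d\upsilon(s)$ expresses $f$ through $g$; substituting the form of $g$ and using $\int_{S}\chi(\sigma(t))\,d\upsilon(t)=\int_{S}\chi(t)\,d\upsilon(t)$ ($\sigma$-invariance again) yields $f=\frac{\psi+\psi\circ\sigma}{2}$ with $\psi:=\chi\int_{S}\chi(t)\,d\upsilon(t)$, and the previous lemma shows $\psi$ is $\upsilon$-spherical. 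Continuity of $f$ forces continuity of $g$, hence of $\chi$ (by [\cite{07}, Theorem~3.18(d)], as in Theorem~2.3), hence of $\psi$.

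The main obstacle is the converse, and within it the reduction of (\ref{elqorachi2}) to (\ref{salma3}). Because here $\upsilon$ is integrated over the \emph{middle} factor of a product $xty$ rather than over a trailing factor as in Section~2, one cannot quote Lemma~2.1 verbatim: the bookkeeping for iterated integrals such as $\int_{S}\int_{S}f(xsty)\,d\upsilon(s)\,d\upsilon(t)$, and the tracking of exactly where $\sigma$ lands after each substitution, become noticeably heavier, and every appeal to $\sigma$-invariance must be checked to apply to a genuinely $\sigma$-transformed variable. Establishing $\int_{S}f(t)\,d\upsilon(t)\neq0$ in this generality — with no Dirac mass to anchor the computation as when $\upsilon=\delta_{z_0}$ — is the single most delicate point.
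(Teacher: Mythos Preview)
Your verification of the ``if'' direction is fine. The converse, however, does not follow the paper's route, and the route you sketch has a structural gap rather than merely heavier bookkeeping.

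The difficulty is that in (\ref{elqorachi2}) the measure $\upsilon$ sits in the \emph{middle} position $xty$, and $\upsilon$ is \emph{not} assumed to have support in the center of $S$. Your auxiliary $g(x)=\int_{S}f(xt)\,d\upsilon(t)\big/\int_{S}f(t)\,d\upsilon(t)$ is tailored to the trailing-position setting of Section~2: to check (\ref{salma3}) for $g$ you need to handle $\int_{S}f(xyt)\,d\upsilon(t)$, but (\ref{elqorachi2}) only gives information about $\int_{S}f(xty)\,d\upsilon(t)$, and these differ once centrality is dropped. The same issue hits your final step: Lemma~3.1 classifies solutions of $\int_{S}f(xyt)\,d\upsilon(t)=f(x)f(y)$, whereas the $\upsilon$-spherical condition in Theorem~3.2 is $\int_{S}\psi(xsy)\,d\upsilon(s)=\psi(x)\psi(y)$. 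Your proposed form $\psi=\chi\int_{S}\chi\,d\upsilon$ with $\chi$ multiplicative is therefore too restrictive; that conclusion is precisely Corollary~3.3, which \emph{adds} the hypothesis that $\operatorname{supp}\upsilon$ lies in the center. In short, your plan proves Corollary~3.3, not Theorem~3.2, and the ``most delicate point'' you flag ($\int_{S}f\,d\upsilon\neq0$) is a symptom: without centrality there is no reason this quantity controls anything.

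The paper avoids the trailing auxiliary entirely. It substitutes $x\mapsto xsy$, $y\mapsto z$ and $x\mapsto\sigma(z)sx$ into (\ref{elqorachi2}), integrates, uses $\sigma$-invariance, and subtracts to obtain the three-variable identity
\[
\int_{S}\int_{S}f(xsytz)\,d\upsilon(s)\,d\upsilon(t)=f(x)\!\int_{S}f(ysz)\,d\upsilon(s)+f(z)\!\int_{S}f(xsy)\,d\upsilon(s)+f(y)\!\int_{S}f(xsz)\,d\upsilon(s)-2f(x)f(y)f(z).
\]
Setting $f_{a}(y):=\int_{S}f(asy)\,d\upsilon(s)-f(a)f(y)$ (note the middle position, matching (\ref{elqorachi2})), this reads $\int_{S}f_{a}(xsy)\,d\upsilon(s)=f_{a}(x)f(y)+f_{a}(y)f(x)$. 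If every $f_{a}=0$ then $f$ itself is $\upsilon$-spherical and substitution gives $f\circ\sigma=f$. Otherwise one appeals to the classification of this sine-addition-type equation in \cite{akkouchi} to write $f=\tfrac{\psi_{1}+\psi_{2}}{2}$ with $\psi_{1},\psi_{2}$ $\upsilon$-spherical; plugging back into (\ref{elqorachi2}) and using linear independence of distinct spherical functions forces $\psi_{2}=\psi_{1}\circ\sigma$. No division by $\int_{S}f\,d\upsilon$ and no reduction to (\ref{salma3}) are needed.
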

\begin{proof} Same computations that are needed in our discussion are due to
Stetk\ae r \cite{st4}. Let $f$ be a solution of (\ref{elqorachi2})
and  let $x,y,z\in S$. If we replace $x$ by $xsy$ and $y$ by $z$ in
(\ref{elqorachi2}) and integrating the result obtained with respect
to $s$ we get
\begin{equation}\label{elqorachi8}
\int_{S}\int_{S}f(xsytz)d\upsilon(s)d\upsilon(t)+\int_{S}\int_{S}f(\sigma(z)txsy)d\upsilon(t)d\upsilon(s)
= 2f(z)\int_{S}f(xsy)d\upsilon(s).
\end{equation}
On the other hand if we replace $x$ by $\sigma(z)sx$ in
(\ref{elqorachi2}) and integrate the resul obtained with respect to
$s$ we obtain
$$\int_{S}\int_{S}f(\sigma(z)sxty)d\upsilon(s)d\upsilon(t)+\int_{S}\int_{S}f(\sigma(y)t\sigma(z)sx)d\upsilon(t)d\upsilon(s)=2f(y)\int_{S}f(\sigma(z)sx)d\upsilon(s)$$
$$=2f(y)\bigg[2f(x)f(z)-\int_{S}f(xsz)d\upsilon(s)\bigg].$$
Since, $\upsilon$ is $\sigma$-invariant, so we have
$$\int_{S}\int_{S}f(\sigma(y)t\sigma(z)sx)d\upsilon(t)d\upsilon(s)=\int_{S}\int_{S}f(\sigma(ytz)sx)d\upsilon(t)d\upsilon(s)$$
$$=2f(x)\int_{S}f(ytz)d\upsilon(t)-\int_{S}\int_{S}f(xsytz)d\upsilon(s)d\upsilon(t).$$
Thus, we get
\begin{equation}\label{elqorachi9}
 \int_{S}\int_{S}f(\sigma(z)sxty)d\upsilon(s)d\upsilon(t)+2f(x)\int_{S}f(ysz)d\upsilon(s)-\int_{S}\int_{S}f(xsytz)d\upsilon(s)d\upsilon(t)\end{equation}
 $$=2f(y)\bigg[2f(x)f(z)-\int_{S}f(xsz)d\upsilon(s)\bigg].$$
 Subtracting this from (\ref{elqorachi8}) we get
\begin{equation}\label{elqorachi10}
    \int_{S}\int_{S}f(xsytz)d\upsilon(s)d\upsilon(t)\end{equation}$$=f(x)\int_{S}f(ysz)d\upsilon(s)+f(z)\int_{S}f(xsy)d\upsilon(s)+f(y)\int_{S}f(xsz)d\upsilon(s)-2f(y)f(x)f(z).
$$ With the notation
\begin{equation}\label{elqorachi12}
f_{x}(y)=\int_{S}f(xsy)d\upsilon(s)-f(x)f(y)\end{equation}
 equation (\ref{elqorachi10}) can be
written as follows
\begin{equation}\label{elqorachi13}
\int_{S}f_{a}(xsy)d\upsilon(s)=f_{a}(x)f(y)+f_{a}(y)f(x),\; x,y\in
S,\end{equation} equation which was solved on groups in  \cite{akkouchi}.\\
If $f_{a}=0$ for all $a\in S$, then $f$ is a $\upsilon$-spherical
function. Substituting $f$ into (\ref{elqorachi2}) we obtain that
$f=f\circ\sigma$. So, $f=\frac{\psi+\psi\circ\sigma}{2}$, with
$\psi=f$ a $\upsilon$-spherical function.\\ If there exists $a\in S$
such that  $f_{a}\neq0$ then from \cite{akkouchi},  there exist two
$\upsilon$-spherical functions $\psi_{1},\psi_{2}$:
$S\longrightarrow\mathbb{ C}$ such that $f =
\frac{\psi_1+\psi_2}{2}$. If $\psi_1\neq \psi_2$ by substituting  $f
= \frac{\psi_1+\psi_2}{2}$ into (1.9) we get  after a computation
that
\begin{equation}\label{equation01}
\psi_1(x)[\psi_2(y)-\psi_1(\sigma(y))]+\psi_2(x)[\psi_1(y)-\psi_2(\sigma(y))]=0\end{equation}
for all $x, y\in $. $\psi_1\neq \psi_2$, then  $(\psi_1,\psi_2)$ are
linearly independent \cite{akkouchi}, so from (\ref{equation01}) we
get $\psi_2(y)=\psi_1(\sigma(y))$ for all $y\in S$. This completes
the proof.
\end{proof}By using Theorem 3.2 and Lemma 3.1 we get the following
result.
\begin{cor} Let $S$ be a locally compact semigroup, let
$\sigma$ : $S\longrightarrow S$ be a continuous involutive
automorphism of $S$,  and let $\upsilon$ be a complex Borel
$\sigma$-invariant measure with compact support  contained in the
center of $S$. The continuous solutions of the functional equation
\begin{equation}\label{elqorachi1}
\int_{S}f(xyt)d\upsilon(t)+\int_{S}f(\sigma(y)xt)d\upsilon(t) =
2f(x)f(y), \;x,y\in S\end{equation}
 are the functions
\begin{equation}\label{elqorachi7}
    f=\frac{\chi+\chi\circ\sigma}{2}\int_{S}\chi(t)d\upsilon(t),
\end{equation} where $\chi$: $S\longrightarrow \mathbb{C}$ is a
continuous multiplicative function.
\end{cor}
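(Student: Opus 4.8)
The plan is to observe that, because the support of $\upsilon$ lies in the center of $S$, equation (\ref{elqorachi1}) is nothing but the special case of (\ref{elqorachi2}) for such measures, and then to run the conclusion of Theorem 3.2 through Lemma 3.1. First I would note that for $t$ in the center of $S$ one has $xty=xyt$ and $\sigma(y)tx=\sigma(y)xt$ for all $x,y\in S$, so the two integral terms of (\ref{elqorachi1}) agree termwise with those of (\ref{elqorachi2}); moreover $\sigma$, being an automorphism, maps the center into itself, and $\upsilon$ is still a $\sigma$-invariant Borel measure with compact support. Hence Theorem 3.2 applies and every continuous solution $f$ of (\ref{elqorachi1}) has the form $f=\frac{\psi+\psi\circ\sigma}{2}$, where $\psi:S\to\mathbb{C}$ is a continuous $\upsilon$-spherical function, i.e. $\int_{S}\psi(xsy)\,d\upsilon(s)=\psi(x)\psi(y)$ for all $x,y\in S$.

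Next I would feed $\psi$ into Lemma 3.1. Using once more the centrality of the support of $\upsilon$, $\int_{S}\psi(xsy)\,d\upsilon(s)=\int_{S}\psi(xys)\,d\upsilon(s)$, so $\psi$ is a continuous solution of equation (\ref{elqorachi3}). If $\psi=0$ then $f=0$, which is the trivial solution (corresponding to $\int_{S}\chi(t)\,d\upsilon(t)=0$); otherwise Lemma 3.1 produces a continuous multiplicative $\chi:S\to\mathbb{C}$ with $\psi=\chi\int_{S}\chi(t)\,d\upsilon(t)$. Since then $\psi\circ\sigma=(\chi\circ\sigma)\int_{S}\chi(t)\,d\upsilon(t)$, substituting into $f=\frac{\psi+\psi\circ\sigma}{2}$ gives exactly $f=\frac{\chi+\chi\circ\sigma}{2}\int_{S}\chi(t)\,d\upsilon(t)$, as claimed.

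Finally I would verify the converse by direct computation: writing $c=\int_{S}\chi(t)\,d\upsilon(t)$ and $f=\frac{c}{2}\,(\chi+\chi\circ\sigma)$, multiplicativity of $\chi$ and the identity $\sigma(\sigma(y)xt)=y\,\sigma(x)\,\sigma(t)$ (valid since $\sigma$ is an involutive automorphism), together with the $\sigma$-invariance of $\upsilon$ (which gives $\int_{S}\chi(\sigma(t))\,d\upsilon(t)=c$), collapse the left-hand side of (\ref{elqorachi1}) to $\frac{c^{2}}{2}\big(\chi(x)+\chi(\sigma(x))\big)\big(\chi(y)+\chi(\sigma(y))\big)=2f(x)f(y)$. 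The only delicate points are the two bookkeeping checks that centrality of the support of $\upsilon$ legitimately permits the swaps $xty\leftrightarrow xyt$ and $xsy\leftrightarrow xys$, and the separate treatment of the degenerate case $\psi=0$ (needed because Lemma 3.1 is really proved only for nonzero solutions); neither is a real obstacle, so the corollary drops out of Theorem 3.2 and Lemma 3.1 with essentially no extra work.
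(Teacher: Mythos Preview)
Your proposal is correct and is exactly the approach the paper takes: the paper's own proof consists of the single sentence ``By using Theorem 3.2 and Lemma 3.1 we get the following result,'' and you have simply spelled out how those two results combine (identifying (\ref{elqorachi1}) with (\ref{elqorachi2}) via centrality of the support, invoking Theorem 3.2 to get $f=\tfrac{\psi+\psi\circ\sigma}{2}$, and then reducing the spherical function $\psi$ to a multiplicative $\chi$ via Lemma 3.1). Your bookkeeping remarks about centrality and the degenerate case $\psi=0$ are appropriate and fill in details the paper leaves implicit.
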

\begin{rem}Let $S$ be a locally compact semigroup,
$\sigma$ be  a continuous involutive anti-automorphism of $S$, and
$\upsilon$ be a complex measure with compact support and which is
$\sigma$-invariant. If $f$ is a continuous  solution of the
functional equation (\ref{elqorachi2}) then by adapting the
computations used in \cite{wilson5} with the following mappings
$R(y)h(x)=\int_{S}h(xty)d\upsilon(t)$;
$L(y)=\int_{S}f(\sigma(y)tx)d\upsilon(t)$  we get that
$\int_{S}f(xty)d\upsilon(t)=\int_{S}f(ytx)d\upsilon(t)$ for all
$x,y\in S$. So, equation (\ref{elqorachi2}) can be written as follow
$$\int_{S}f(xty)d\upsilon(t)+\int_{S}f(xt\sigma(y))d\upsilon(t)=2f(x)f(y)\;x,y\in
S.$$  The last functional equation has been studied in
\cite{akkouchi}.
\end{rem}
\section{The superstability of the
functional equation (\ref{eq555})}In this section we obtain the
superstability of the variant Van Vleck's functional equation
(\ref{eq555}) on semigroups.
\begin{lem} Let $S$ be a semigroup, let $\sigma$ be an involutive morphism  of $S$. Let $\mu$ be a complex
measure that is a linear combination of Dirac measures
$(\delta_{z_{i}})_{i\in I}$, such that for all $i\in I$, $z_{i}$ is
contained in the center of $S$. Let $\delta>0$ be fixed.  If $f:
S\longrightarrow \mathbb{C}$ is an unbounded function which
satisfies the inequality
\begin{equation}\label{hindd1}
 \bigg|\int_{S}f(\sigma(y)xt)d\mu(t)-\int_{S}f(xyt)d\mu(t)-2f(x)f(y)\bigg|\leq \delta
\end{equation} for all $x,y\in S$. Then, for all $x,y\in S$
\begin{equation}\label{hind2}
    f(\sigma(x))=-f(x),
\end{equation}
\begin{equation}\label{salmahind}
 |f(\sigma(x)y)+f(\sigma(y)x)|\leq
 \frac{3\delta\|\mu\|}{|\int_{S}f(s)d\mu(s)|},
\end{equation}
\begin{equation}\label{hind3}
 \bigg|\int_{S}\int_{S}f(x\sigma(s)t)d\mu(s)d\mu(t)-f(x)\int_{S}f(s)d\mu(s)\bigg|\leq\frac{\delta\|\mu\|}{2},
\end{equation}
\begin{equation}\label{hind4}
\bigg|\int_{S}\int_{S}f(xst)d\mu(s)d\mu(t)+f(x)\int_{S}f(t)d\mu(t)\bigg|\leq
\frac{3\delta\|\mu\|}{2},
\end{equation}
\begin{equation}\label{hind5}
\int_{S}f(t)d\mu(t)\neq 0,
\end{equation}
\begin{equation}\label{hind6}
 \int_{S}f(x\sigma(s)d\mu(s)=\int_{S}f(\sigma(x)\sigma(s))d\mu(s),\end{equation}
 \begin{equation}\label{hind7}
 \bigg|\int_{S}f(xs)d\mu(s)-\int_{S}f(\sigma(x)s)d\mu(s)\bigg|\leq
 \frac{6\delta\|\mu\|^{2}}{|\int_{S}f(s)d\mu(s)|}.
\end{equation}
The function $g$ defined by
\begin{equation}\label{hind8}g(x)=
\frac{\int_{S}f(xt)d\mu(t)}{\int_{S}f(t)d\mu(t)}\;\text{for}\;x\in
S\end{equation} is unbounded on $S$ and satisfies the following
inequality
\begin{equation}\label{hind9}
 |g(xy)+g(\sigma(y)x)-2g(x)g(y)|\leq
 \frac{3\delta\|\mu\|^{2}}{(|\int_{S}f(s)d\mu(s))^{2}|}.
\end{equation} for all $x,y\in S$. Furthermore, $g$ satisfies (1.8),
$\int_{S}\int_{S}g(st)d\mu(s)d\mu(t)\neq0$ and
$\int_{S}g(s)d\mu(s)=0$.
\end{lem}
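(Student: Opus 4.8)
The overall strategy is to rerun, this time carrying error terms, the computations of Lemmas 2.1 and 2.2: wherever those proofs used the exact equation (\ref{eq555}), we invoke instead the inequality (\ref{hindd1}), so that an identity obtained after integrating $k$ times against $\mu$ now carries an error of order $\delta\|\mu\|^{k}$. Two features are specific to the superstable setting. First, most of the estimates carry a factor $(\int_{S}f(t)\,d\mu(t))^{-1}$, so they presuppose (\ref{hind5}); that has to be settled at the outset. Second, whenever one reaches an inequality of the form $|A(x)\,f(y)|\le C\delta\|\mu\|^{k}$ valid for all $y$, dividing by $|f(y)|$ and, since $f$ is unbounded, letting $|f(y)|\to\infty$ forces $A(x)=0$ \emph{exactly}; this ``unboundedness trick'' (used with a little care when $A$ itself is known only up to a bounded error) produces the exact identities (\ref{hind2}), (\ref{hind6}), and later the exact equation (\ref{eq666}) for $g$.

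I would first prove (\ref{hind5}), which is the crux. Suppose $\int_{S}f(t)\,d\mu(t)=0$. Putting $y=s$ in (\ref{hindd1}) and integrating, then replacing $y$ by $ys$ and integrating again, and using (\ref{hindd1}) once more — the manipulation that proved (\ref{eqo8}), now with errors — gives $|f(y)\int_{S}f(xs)\,d\mu(s)-f(x)\int_{S}f(ys)\,d\mu(s)|\le C\delta\|\mu\|$ for all $x,y$. If $x\mapsto\int_{S}f(xs)\,d\mu(s)$ is identically $0$, then (\ref{hindd1}) with $y=x$ gives $|2f(x)^{2}|\le\delta$, so $f$ is bounded; otherwise fix $x_{0}$ with $\int_{S}f(x_{0}s)\,d\mu(s)\neq0$ (hence $f(x_{0})\neq0$) and deduce $\int_{S}f(xs)\,d\mu(s)=\beta f(x)+O(\delta)$ for a constant $\beta$. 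Substituting this into (\ref{hindd1}) produces the \emph{pointwise} estimate $|f(\sigma(y)x)-f(xy)-\frac{2}{\beta}f(x)f(y)|\le C'\delta$ (the case $\beta=0$ again forcing $f$ bounded). On this approximate equation one runs the symmetry argument of the exact proof (interchange $x$ and $y$; apply $\sigma$) to get that $f+f\circ\sigma$ is bounded, and then distinguishes the two kinds of morphism: if $\sigma$ is an anti-automorphism a similar argument (using the unboundedness trick once more) yields $f=f\circ\sigma$, which together with $f+f\circ\sigma$ bounded makes $f$ bounded; if $\sigma$ is an automorphism one reaches an approximate version of (\ref{eq666}) for $l=-f/\beta$, so by the superstability of (\ref{eq666}) the unbounded $l$ satisfies (\ref{eq666}) exactly, whence $l\circ\sigma=l$, i.e.\ $f=f\circ\sigma$, and again $f$ is bounded. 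Each case contradicts unboundedness, so (\ref{hind5}) holds. This is the step I expect to be the main obstacle: unlike the others it is not mere bookkeeping, and closing the contradiction calls on the superstability of the auxiliary equation (\ref{eq666}).

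With (\ref{hind5}) available, the remaining assertions come by faithfully perturbing the corresponding step of Lemma 2.1. For (\ref{salmahind}): subtracting the two consequences of (\ref{hindd1}) obtained by replacing $x$ by $xs$ and $y$ by $yt$ and integrating, then simplifying by one more use of (\ref{hindd1}), gives $|f(x)\int_{S}f(yt)\,d\mu(t)-f(y)\int_{S}f(xs)\,d\mu(s)-f(\sigma(y)x)\int_{S}f(t)\,d\mu(t)|\le\frac{3}{2}\delta\|\mu\|$; adding the instance with $x$ and $y$ interchanged (the first two terms change sign) gives $|[f(\sigma(y)x)+f(\sigma(x)y)]\int_{S}f(t)\,d\mu(t)|\le 3\delta\|\mu\|$, and division by $|\int_{S}f(t)\,d\mu(t)|$ gives (\ref{salmahind}). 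The identity (\ref{hind6}) is obtained by replacing $x$ by $x\sigma(s)$ in (\ref{hindd1}), integrating, and using (\ref{salmahind}) to reach $|2f(y)(\int_{S}f(x\sigma(s))\,d\mu(s)-\int_{S}f(\sigma(x)\sigma(s))\,d\mu(s))|\le C\delta\|\mu\|^{2}$ for all $y$, then applying the unboundedness trick; (\ref{hind7}) follows from (\ref{hind6}) together with two integrated uses of (\ref{salmahind}) (this accounts for the constant $6\delta\|\mu\|^{2}/|\int_{S}f\,d\mu|$); and (\ref{hind2}) follows by replacing $x$ by $\sigma(x)$ in (\ref{hindd1}), rewriting the first term via the morphism property and (\ref{hind7}), comparing with (\ref{hindd1}) after interchanging the variables to obtain $|2f(\sigma(x))f(y)+2f(x)f(y)|\le C\delta\|\mu\|^{2}$, and dividing by $|f(y)|$. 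Finally (\ref{hind3}) and (\ref{hind4}) are the perturbed versions of (\ref{eqo9}) and (\ref{eqo10}) (whose exact proofs use (\ref{hind2})), the constants $\frac{1}{2}\delta\|\mu\|$ and $\frac{3}{2}\delta\|\mu\|$ recording the number of $\delta\|\mu\|$-errors incurred.

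For the statements about $g$: by (\ref{hind5}) the function in (\ref{hind8}) is well defined, and it is unbounded, since if $g$ were bounded then (\ref{hindd1}) would give $|2f(x)f(y)|\le\delta+2\|g\|_{\infty}|\int_{S}f(t)\,d\mu(t)|$ and then (take $y=x$) $f$ would be bounded. Repeating the triple-integral computation of part (1) of Lemma 2.2 with (\ref{hind3}), (\ref{hind4}) and (\ref{hindd1}) in place of (\ref{eqo9}), (\ref{eqo10}) and (\ref{eq555}) yields (\ref{hind9}). Since $g$ is unbounded and satisfies (\ref{hind9}), the superstability of (\ref{eq666}) shows that $g$ satisfies (\ref{eq666}) exactly, so by the structure theorem for the solutions of (\ref{eq666}) (Stetk\ae r) $g=\frac{1}{2}(\chi+\chi\circ\sigma)$ for a non-zero multiplicative $\chi$. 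Substituting this form into (\ref{hindd1}) rewrites that inequality as $|2f(x)f(y)+\frac{1}{2}(\int_{S}f(t)\,d\mu(t))u(x)u(y)|\le\delta$ with $u:=\chi-\chi\circ\sigma$ (which is unbounded, as $f$ is), and one more application of the unboundedness trick upgrades this to an exact factorization $f=\gamma u$ with $\gamma\neq0$; plugging $f=\gamma(\chi-\chi\circ\sigma)$ into the identity $\int_{S}f(xt)\,d\mu(t)=g(x)\int_{S}f(t)\,d\mu(t)$ and using the linear independence of the distinct multiplicative functions $\chi$ and $\chi\circ\sigma$ forces $\int_{S}\chi(t)\,d\mu(t)\neq0$ and $\int_{S}\chi(\sigma(t))\,d\mu(t)=-\int_{S}\chi(t)\,d\mu(t)$. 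Hence $\int_{S}g(s)\,d\mu(s)=\frac{1}{2}(\int_{S}\chi\,d\mu+\int_{S}\chi\circ\sigma\,d\mu)=0$ and $\int_{S}\int_{S}g(st)\,d\mu(s)\,d\mu(t)=\frac{1}{2}((\int_{S}\chi\,d\mu)^{2}+(\int_{S}\chi\circ\sigma\,d\mu)^{2})=(\int_{S}\chi(t)\,d\mu(t))^{2}\neq0$, and (since $g=g\circ\sigma$) $g$ satisfies (\ref{eq666}), which is (1.8).
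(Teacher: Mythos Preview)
Your overall plan---track the error terms through the computations of Lemmas~2.1 and~2.2, and use unboundedness to upgrade suitable inequalities to identities---is exactly the paper's strategy, and for (\ref{salmahind}), (\ref{hind6}), (\ref{hind7}), (\ref{hind2}), (\ref{hind3}), (\ref{hind4}) and (\ref{hind9}) your outline matches the paper step for step.

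There are, however, two places where you take a noticeably longer route than the paper. First, in the proof of (\ref{hind5}): after reaching the approximate pointwise equation $|f(\sigma(y)x)-f(xy)-\tfrac{2}{\beta}f(x)f(y)|\le C'\delta$ and deducing that $f+f\circ\sigma$ is bounded, you split into the automorphism and anti-automorphism cases and, in the automorphism case, invoke the superstability of (\ref{eq666}) for $l=-f/\beta$. The paper avoids both the case split and that external appeal: from the approximate equation it also derives (by setting $x=s$, respectively $y=s$, and integrating) that $x\mapsto\int_{S}f(\sigma(x)s)\,d\mu(s)-\int_{S}f(x\sigma(s))\,d\mu(s)$ is bounded; combined with the boundedness of $\int_{S}f(\sigma(x)s)\,d\mu(s)+\int_{S}f(x\sigma(s))\,d\mu(s)$ (which follows from $f+f\circ\sigma$ bounded), this makes $x\mapsto\int_{S}f(xs)\,d\mu(s)$ bounded, and then (\ref{hindd1}) forces $f$ bounded. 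This works uniformly for both kinds of involutive morphism.

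Second, for the final assertions about $g$ you pass through the structure theorem for (\ref{eq666}), write $g=\tfrac12(\chi+\chi\circ\sigma)$, and then argue that $f=\gamma(\chi-\chi\circ\sigma)$ exactly, from which you read off $\int_{S}g\,d\mu=0$ and $\int_{S}\int_{S}g(st)\,d\mu\,d\mu\neq0$. This works (your ``exact factorization'' step does go through once you pick a sequence $y_n$ with $|f(y_n)|\to\infty$ and note the constants $-cu(y_n)/(2f(y_n))$ must all coincide because $u$ is unbounded), but it is considerably more than the lemma requires and effectively proves Theorem~4.2 inside Lemma~4.1. The paper instead argues directly: adding (\ref{hind3}) and (\ref{hind4}) gives $\bigl|\int_{S}g(x\sigma(k))\,d\mu(k)+\int_{S}g(xk)\,d\mu(k)\bigr|\le 2\delta\|\mu\|/|\int_{S}f\,d\mu|$, and since $g$ already satisfies (\ref{eq666}) exactly (hence is central and $g(xk)+g(x\sigma(k))=2g(x)g(k)$), one obtains $|2g(x)\int_{S}g(k)\,d\mu(k)|\le 2\delta\|\mu\|/|\int_{S}f\,d\mu|$ for all $x$; unboundedness of $g$ then forces $\int_{S}g\,d\mu=0$. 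No knowledge of the explicit form of $g$ or of $f$ is needed at this stage.
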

\begin{proof} Equation (4.6): Let $f$:  $S\longrightarrow \mathbb{C}$ be an unbounded function
which satisfies (4.1).  Equation (4.6): First, We prove that
$\int_{S}f(s)d\mu(s)\neq 0$. Assume that $\int_{S}f(s)d\mu(s)=0$. By
replacing $y$ by $s$ and $x$ by $\sigma(y)x$ in (4.1) and
integrating the result obtained with respect to $s$ we get
\begin{equation}\label{ah10}
    \bigg|\int_{S}\int_{S}f(\sigma(y)x\sigma(s)t)d\mu(s)d\mu(t)-\int_{S}\int_{S}f(\sigma(y)xst)d\mu(s)d\mu(t)\bigg|\leq\delta\|\mu\|.
\end{equation}Replacing  $y$ by $ys$ in (4.1) and integrating the result obtained
with respect to $s$ we have \begin{equation}\label{ah11}
    \bigg|\int_{S}\int_{S}f(\sigma(y)x\sigma(s)t)d\mu(s)d\mu(t)-\int_{S}\int_{S}f(xyst)d\mu(s)d\mu(t)-2f(x)\int_{S}f(ys)d\mu(s)\bigg|\leq\delta\|\mu\|.\end{equation}
On the other hand by replacing  $x$ by $xs$ in (4.1) and integrating
the result obtained with respect to $s$ we obtain
\begin{equation}\label{ah12}
    \bigg|\int_{S}\int_{S}f(\sigma(y)xst)d\mu(s)d\mu(t)-\int_{S}\int_{S}f(xyst)d\mu(s)d\mu(t)-2f(y)\int_{S}f(xs)d\mu(s)\bigg|\leq\delta\|\mu\|.\end{equation}
    By subtracting  the result of equation (\ref{ah12}) from the result of (\ref{ah11}) and
    using the triangle inequality, we get after computation that
    \begin{equation}\label{ah13}
   \bigg |\int_{S}\int_{S}f(\sigma(y)x\sigma(s)t)d\mu(s)d\mu(t)-\int_{S}\int_{S}f(\sigma(y)xst)d\mu(s)d\mu(t)\end{equation}
    $$-2[f(x)\int_{S}f(ys)d\mu(s)-f(y)\int_{S}f(xs)d\mu(s)]\bigg|\leq2\delta\|\mu\|.$$
From  (\ref{ah10}), (\ref{ah13}) and
    the triangle inequality we get
    \begin{equation}\label{ah14}
    \bigg|f(x)\int_{S}f(ys)d\mu(s)-f(y)\int_{S}f(xs)d\mu(s)\bigg|\leq\frac{3\delta}{2}\|\mu\|.\end{equation}
    $f$ is assumed to be  unbounded function on $S$ then $f\neq 0$. Let $y_{0}\in
    S$ such that $f(y_0)\neq 0$. Equation (\ref{ah14}) can be
    written as follows \begin{equation}\label{ah15}
    \bigg|\int_{S}f(xs)d\mu(s)-\alpha f(x)|\leq\frac{3\delta}{2|f(y_0)\bigg|}\|\mu\|,\end{equation}
    where $\alpha=\frac{\int_{S}f(y_{0}s)d\mu(s)}{f(y_0)}$. Of
    course $\alpha\neq 0$ because if $\alpha=0$ then by using
    (\ref{ah15}) we deduce that the function $x\longmapsto \int_{S}f(xs)d\mu(s)$ is bounded and from (4.1) and the triangle inequality we get  $f$ bounded. Which contradict the
    assumption that $f$ is an unbounded function on $S$.\\
    Now, from (\ref{ah15}) and the triangle inequality  equation (4.1) can be written as
    follows \begin{equation}\label{ah16}
 |f(\sigma(y)x)-f(xy)-\frac{2}{\alpha}f(x)f(y)|\leq
 \frac{\frac{3\delta\|\mu\|}{|f(y_0)|}+\delta}{|\alpha|}=M
\end{equation} for all $x,y\in S$. Since $\int_{S}f(s)d\mu(s)=0$. So, if we replace $y$ by  $s$ in (\ref{ah16}) and integrating the result obtained with respect to $s$ we
get \begin{equation}\label{ah17}
 \bigg|\int_{S}f(\sigma(s)x)d\mu(s)-\int_{S}f(xs)d\mu(s)\bigg|\leq M\|\mu\|\;\text{for all}\; x\in S.
\end{equation}
Replacing  $y$ by $x$ and $x$ by $s$ in (\ref{ah16}) and integrating
the result obtained with respect to $s$ we get
\begin{equation}\label{ah18}
 \bigg|\int_{S}f(\sigma(x)s)d\mu(s)-\int_{S}f(sx)d\mu(s)\bigg|\leq M\|\mu\|\;\text{for all}\;x\in S.
\end{equation}Subtracting the result of (\ref{ah17}) from the result of (\ref{ah18}) and
using the triangle inequality we get \begin{equation}\label{ah19}
 \bigg|\int_{S}f(\sigma(x)s)d\mu(s)-\int_{S}f(\sigma(s)x)d\mu(s)\bigg|\leq 2M\|\mu\|\;\text{for all}\;x\in S.
\end{equation}By interchanging $x$ with $y$ in (\ref{ah16}) we get
\begin{equation}\label{moh1}
 |f(\sigma(x)y)-f(yx)-\frac{2}{\alpha}f(x)f(y)|\leq
 M.
\end{equation} If we replace $y$ by $\sigma(y)$ in (\ref{ah16}) we
have \begin{equation}\label{moh2}
 |f(yx)-f(x\sigma(y))-\frac{2}{\alpha}f(x)f(\sigma(y))|\leq
 M.
\end{equation} By adding  the results of (\ref{moh2}) and (\ref{moh1}) and using the triangle inequality we obtain
\begin{equation}\label{moh3}
 |f(\sigma(x)y)-f(x\sigma(y))-\frac{2}{\alpha}f(x)[f(\sigma(y))+f(\sigma(y))]|\leq
 2M.
\end{equation} By replacing $x$ by $\sigma(x)$ in (\ref{moh3}) we
get \begin{equation}\label{moh4}
 |f(xy)-f(\sigma(x)\sigma(y))-\frac{2}{\alpha}f(\sigma(x))[f(\sigma(y))+f(\sigma(y))]|\leq
 2M.
\end{equation} If we replace $y$ by $\sigma(y)$ in (\ref{moh3}) we
get \begin{equation}\label{moh5}
 |f(\sigma(x)\sigma(y))-f(xy)-\frac{2}{\alpha}f(x)[f(\sigma(y))+f(\sigma(y))]|\leq
 2M.
\end{equation}Now, by adding the results of (\ref{moh4}) and
(\ref{moh5}) and using the triangle inequality we have
\begin{equation}\label{moh6}
 |[f(x)+f(\sigma(x))][f(\sigma(y))+f(\sigma(y))]|\leq
 2M|\alpha|.
\end{equation} That is $x\longmapsto f(x)+f(\sigma(x))$ is a bounded
function on $S$. So, the function $x\longmapsto
\int_{S}f(\sigma(x)s)d\mu(s)+\int_{S}f(x\sigma(s))d\mu(s)$ is also a
bounded function on $S$. Since, from (\ref{ah19}) we have
$x\longmapsto
\int_{S}f(\sigma(x)s)d\mu(s)-\int_{S}f(x\sigma(s))d\mu(s)$ is a
bounded function on $S$. Consequently, the function $x\longmapsto
\int_{S}f(xs)d\mu(s)$ is a bounded function on $S$ and from (4.1)
and the triangle inequality we get that  $f$ is a bounded function
on $S$. Which contradict the assumption that $f$ is an unbounded
function on $S$ and  this proves (\ref{hind5}).\\Equation (4.3): By
replacing $y$ by $ys$ in (4.1) and integrating the result obtained
with respect to $s$ we get
\begin{equation}\label{ah29}
 \bigg|\int_{S}\int_{S}f(\sigma(y)x\sigma(s)t)d\mu(s)d\mu(t)-\int_{S}\int_{S}f(xyst)d\mu(s)d\mu(t)-2f(x)\int_{S}f(ys)d\mu(s)\bigg|\leq
 \delta\|\mu\|.
\end{equation}If we replace $x$ by $xs$ in (4.1) and integrating the result obtained with respect to $s$ we get
\begin{equation}\label{ah30}
 \bigg|\int_{S}\int_{S}f(\sigma(y)xst)d\mu(s)d\mu(t)-\int_{S}\int_{S}f(xyst)d\mu(s)d\mu(t)-2f(y)\int_{S}f(xs)d\mu(s)\bigg|\leq
 \delta\|\mu\|.
\end{equation} By subtracting the result of (\ref{ah30}) from the result of (\ref{ah29}) and
using the triangle inequality we obtain
\begin{equation}\label{ah31}
 \bigg|\int_{S}\int_{S}f(\sigma(y)x\sigma(s)t)d\mu(s)d\mu(t)-\int_{S}\int_{S}f(\sigma(y)xst)d\mu(s)d\mu(t)\end{equation}
 $$-2[f(x)\int_{S}f(ys)d\mu(s)-f(y)\int_{S}f(xs)d\mu(s)]\bigg|\leq 2\delta\|\mu\|.
$$Replacing $y$ by $s$ and $x$ by $\sigma(y)x$ in (4.1) and
integrating the result obtained with respect to $s$ we get
\begin{equation}\label{ah32}
 \bigg|\int_{S}\int_{S}f(\sigma(y)x\sigma(s)t)d\mu(s)d\mu(t)-\int_{S}\int_{S}f(\sigma(y)xst)d\mu(s)d\mu(t)-2f(\sigma(y)x)\int_{S}f(s)d\mu(s)\bigg|\leq \delta\|\mu\|.
\end{equation} By subtracting the result of (\ref{ah31}) from the result of (\ref{ah32}) and
using the triangle inequality we obtain
\begin{equation}\label{ah33}
 |f(\sigma(y)x)\int_{S}f(s)d\mu(s)-[f(x)\int_{S}f(ys)d\mu(s)-f(y)\int_{S}f(xs)d\mu(s)]|\leq \frac{3\delta\|\mu\|}{2}.
\end{equation}By interchanging $x$ and $y$ in (\ref{ah33}) we have
\begin{equation}\label{ah34}
 |f(\sigma(x)y)\int_{S}f(s)d\mu(s)-[f(y)\int_{S}f(xs)d\mu(s)-f(x)\int_{S}f(ys)d\mu(s)]|\leq \frac{3\delta\|\mu\|}{2}.
\end{equation}By adding the result of (\ref{ah33}) and the result of (\ref{ah34}) and using the triangle inequality we get
\begin{equation}\label{ah35}
 |f(\sigma(x)y)+f(\sigma(y)x)|\leq \frac{3\delta\|\mu\|}{|\int_{S}f(s)d\mu(s)|}.
\end{equation}for all $x,y\in S$. This proves
(\ref{salmahind}).\\Equation (4.7):  Replacing $x$ by $x\sigma(s)$
in (4.1) and integrating the result obtained with respect to $s$ we
get\begin{equation}\label{ah36}
 \bigg|\int_{S}\int_{S}f(\sigma(y)x\sigma(s)t)d\mu(s)d\mu(t)-\int_{S}\int_{S}f(xy\sigma(s)t)d\mu(s)d\mu(t)-2f(y)\int_{S}f(x\sigma(s))d\mu(s)\bigg|\leq
 \delta\|\mu\|.
\end{equation}If we replace $y$ by $ys$ and $x$ by $xt$ in (\ref{ah35}) and
integrating the result obtained with respect to $s$ and $t$ we
obtain
\begin{equation}\label{ah37}
 \bigg|\int_{S}\int_{S}f(\sigma(x)y\sigma(t)s)d\mu(s)d\mu(t)+\int_{S}\int_{S}f(\sigma(y)x\sigma(s)t)d\mu(s)d\mu(t)\bigg|\leq \frac{3\delta\|\mu\|^{3}}{|\int_{S}f(s)d\mu(s)|}.
\end{equation} By subtracting the result of (\ref{ah36}) from  the
result of (\ref{ah37}) and using the triangle inequality we get
\begin{equation}\label{ah38}
\bigg
|\int_{S}\int_{S}f(\sigma(x)y\sigma(t)s)d\mu(s)d\mu(t)+\int_{S}f(xy\sigma(s)t)d\mu(s)d\mu(t)+2f(y)\int_{S}f(x\sigma(s)d\mu(s)\bigg|
\end{equation}$$\leq
  \frac{3\delta\|\mu\|^{3}}{|\int_{S}f(s)d\mu(s)|}+\delta\|\mu\|.$$ Replacing $x$ by $\sigma(x)$ in (\ref{ah38}) we get
 \begin{equation}\label{ah39}
 \bigg|\int_{S}\int_{S}f(xy\sigma(t)s)d\mu(s)d\mu(t)+\int_{S}f(\sigma(x)y\sigma(s)t)d\mu(s)d\mu(t)+2f(y)\int_{S}f(\sigma(x)\sigma(s)d\mu(s)\bigg|
\end{equation}$$\leq
  \frac{3\delta\|\mu\|^{3}}{|\int_{S}f(s)d\mu(s)|}+\delta\|\mu\|.$$ Subtracting the result of (\ref{ah39}) from  the
result of (\ref{ah38}) and using the triangle inequality we get
\begin{equation}\label{ah40}
 \bigg|2f(y)[\int_{S}f(x\sigma(s)d\mu(s)-\int_{S}f(\sigma(x)\sigma(s)d\mu(s)]\bigg|
\leq
  \frac{6\delta\|\mu\|^{3}}{|\int_{S}f(s)d\mu(s)|}+2\delta\|\mu\|.\end{equation} Since $f$ is assumed to be unbounded then we
 deduce (\ref{hind6}).\\Equation (4.8): If we replace $y$ by $\sigma(s)$ in (\ref{ah35}) and
integrating the result obtained with respect to $s$  we obtain
\begin{equation}\label{ah41}
 \bigg|\int_{S}f(xs)d\mu(s)+\int_{S}f(\sigma(x)\sigma(s))d\mu(s)\bigg|\leq \frac{3\delta\|\mu\|^{2}}{|\int_{S}f(s)d\mu(s)|}.
\end{equation}In view of (\ref{hind6}) the inequality (\ref{ah41}) can be written as follows
 \begin{equation}\label{ah42}
 \bigg|\int_{S}f(xs)d\mu(s)+\int_{S}f(x\sigma(s))d\mu(s)\bigg|\leq \frac{3\delta\|\mu\|^{2}}{|\int_{S}f(s)d\mu(s)|}.
\end{equation}Replacing $y$ by $s$ in (\ref{ah35}) and
integrating the result obtained with respect to $s$  we get
\begin{equation}\label{ah43}
 \bigg|\int_{S}f(x\sigma(s))d\mu(s)+\int_{S}f(\sigma(x)s)d\mu(s)\bigg|\leq \frac{3\delta\|\mu\|^{2}}{|\int_{S}f(s)d\mu(s)|}.
\end{equation}By subtracting the result of (\ref{ah42}) from the
result of (\ref{ah43}) we obtain (\ref{hind7}).\\
Equation (4.2): Replacing $x$ by $\sigma(x)$ in (4.1) we get
\begin{equation}\label{ah44}
\bigg
|\int_{S}f(\sigma(y)\sigma(x)t)d\mu(t)-\int_{S}f(\sigma(x)yt)d\mu(s)d\mu(t)-2f(\sigma(x))f(y)\bigg|\leq
\delta.
\end{equation}Now, we will discuss two cases.
\\Case 1. If $\sigma$ is an involutive automorphism of $S$. Replacing $x$ by $yx$ in (\ref{hind7}) we
obtain
\begin{equation}\label{ah45}
\bigg
|\int_{S}f(yxs)d\mu(s)-\int_{S}f(\sigma(y)\sigma(x)s)d\mu(s)\bigg|\leq
 \frac{6\delta\|\mu\|^{2}}{|\int_{S}f(s)d\mu(s)|}.
\end{equation}Adding the result of (\ref{ah44}) to the result of (\ref{ah45}) and using
the triangle inequality we get
\begin{equation}\label{ah46}
 \bigg|\int_{S}f(yxs)d\mu(s)-\int_{S}f(\sigma(x)ys)d\mu(s)-2f(\sigma(x))f(y)\bigg|\leq
 \frac{6\delta\|\mu\|^{2}}{|\int_{S}f(s)d\mu(s)|}+\delta.
\end{equation}
By interchanging $x$ and $y$ in (4.1) we get
\begin{equation}\label{ah47}
\bigg
|\int_{S}f(\sigma(x)ys)d\mu(s)-\int_{S}f(yxs)d\mu(s)-2f(x)f(y)\bigg|\leq\delta.
\end{equation} By adding the result of (\ref{ah47}) and the result of (\ref{ah46}) and using
the triangle inequality we obtain \begin{equation}\label{ah48}
 |2f(y)[f(\sigma(x))+f(x)]|\leq
 \frac{6\delta\|\mu\|^{2}}{|\int_{S}f(s)d\mu(s)|}+2\delta.
\end{equation}Since $f$ is unbounded then we get (\ref{hind2}).\\
Case 2. If $\sigma$ is an involutive anti-automorphism of $S$. By
replacing  $x$ by $yx$ in (\ref{hind7}) we have
\begin{equation}\label{aj1}
 \bigg|\int_{S}f(yxs)d\mu(s)-\int_{S}f(\sigma(x)\sigma(y)s)d\mu(s)\bigg|\leq
 \frac{6\delta\|\mu\|^{2}}{|\int_{S}f(s)d\mu(s)|}.
\end{equation}If we replace $y$ by $x$ and $y$ by $\sigma(y)$ in
(4.1) we get \begin{equation}\label{aj2}
 |\int_{S}f(\sigma(x)\sigma(y)t)d\mu(t)-\int_{S}f(\sigma(y)xt)d\mu(t)-2f(\sigma(y))f(x)|\leq\delta.
\end{equation} By adding the results of (\ref{aj1}) and  (\ref{aj2})
and using the triangle inequality we get \begin{equation}\label{aj3}
 \bigg|\int_{S}f(yxt)d\mu(t)-\int_{S}f(\sigma(y)xt)d\mu(t)-2f(\sigma(y))f(x)\bigg|\leq\delta+\frac{6\delta\|\mu\|^{2}}{|\int_{S}f(s)d\mu(s)}|.
\end{equation} From (4.1) we have
\begin{equation}\label{aj4}
 \bigg|\int_{S}f(\sigma(x)yt)d\mu(t)-\int_{S}f(yxt)d\mu(t)-2f(y)f(x)\bigg|\leq\delta.
\end{equation} and from (\ref{hind7}) we have
\begin{equation}\label{aj5}
\bigg
|\int_{S}f(\sigma(x)ys)d\mu(s)-\int_{S}f(\sigma(y)xs)d\mu(s)\bigg|\leq
 \frac{6\delta\|\mu\|^{2}}{|\int_{S}f(s)d\mu(s)|}\end{equation}By
 subtracting  the results of (\ref{aj4}) and (\ref{aj5}) and using the triangle inequality we get
 \begin{equation}\label{aj6}
 \bigg|\int_{S}f(\sigma(y)xt)d\mu(t)-\int_{S}f(yxt)d\mu(t)-2f(y)f(x)\bigg|\leq\delta+\frac{6\delta\|\mu\|^{2}}{|\int_{S}f(s)d\mu(s)|}.
\end{equation}By adding the results of (\ref{aj6}) and (\ref{aj3})
 \begin{equation}\label{aj7}|2f(x)(f(y)+f(\sigma(y)))|\leq\frac{6\delta\|\mu\|^{2}}{|\int_{S}f(s)d\mu(s)|}+\delta
\end{equation} for all $x,y\in S.$ Since $f$ is unbounded then we we get (\ref{hind2}).\\
Equation (4.4): If we replace $x$ by $\sigma(s)$ in (4.1) and using
(4.2) and integrating the result obtained with respect to $s$ we get
\begin{equation}\label{ah49}
\bigg
|-\int_{S}\int_{S}f(ys\sigma(t))d\mu(s)d\mu(t)-\int_{S}\int_{S}f(y\sigma(s)t)d\mu(s)d\mu(t)+2f(y)\int_{S}f(s)d\mu(s)\bigg|\leq\delta\|\mu\|,
\end{equation}which proves (\ref{hind3})\\ Equation (4.5): By replacing $y$ by $s$ in
(4.1) and integrating the result obtained with respect to $s$ we get
\begin{equation}\label{ah50}
 \bigg|\int_{S}\int_{S}f(x\sigma(s)t)d\mu(s)d\mu(t)-\int_{S}\int_{S}f(xst)d\mu(s)d\mu(t)-2f(x)\int_{S}f(s)d\mu(s)\bigg|\leq\delta\|\mu\|.
\end{equation} From (\ref{hind3}) and the triangle inequality we
obtain $$
 \bigg|\int_{S}\int_{S}f(xst)d\mu(s)d\mu(t)+f(x)\int_{S}f(s)d\mu(s)\bigg|\leq\delta\|\mu\|+\frac{\delta\|\mu\|}{2}.$$  for all $x\in S.$ This proves (\ref{hind4}).
 \\Equation (4.10): Let $g$ be the function defined by
$g(x)=
\frac{\int_{S}f(xt)d\mu(t)}{\int_{S}f(t)d\mu(t)}\;\text{for}\;x\in
S.$ Then we have
 \begin{equation}\label{ah51}
\int_{S}f(s)d\mu(s)\int_{S}f(k)d\mu(k)[g(xy)+g(\sigma(y)x)-2g(x)g(y)]
 \end{equation}
 $$=\int_{S}f(k)d\mu(k)\int_{S}f(xyt)d\mu(t)+\int_{S}f(s)d\mu(s)\int_{S}f(\sigma(y)xt)d\mu(t)-2\int_{S}f(xs)d\mu(s)\int_{S}f(ys)d\mu(s)$$
 $$=\int_{S}[f(xyt)\int_{S}f(k)d\mu(k)+\int_{S}\int_{S}f(xytks)d\mu(k)d\mu(s)]d\mu(t)$$
 $$+\int_{S}[f(\sigma(y)xt)\int_{S}f(s)d\mu(s)-\int_{S}\int_{S}f(\sigma(y)xt\sigma(k)s)d\mu(k)d\mu(s)]d\mu(t)$$
 $$+\int_{S}\int_{S}[\int_{S}f(\sigma(yk)xst)d\mu(t)-\int_{S}f(xsykst)d\mu(t)$$
 $$-2f(xs)f(yk)]d\mu(k)d\mu(s).$$ So, from (\ref{hind3}), (\ref{hind4}) and (4.1) we get
 $$\int_{S}f(s)d\mu(s)\int_{S}f(k)d\mu(k)[g(xy)+g(\sigma(y)x)-2g(x)g(y)]$$
 $$\leq\frac{3\delta\|\mu\|^{2}}{2}+\frac{\delta\|\mu\|^{2}}{2}+\delta\|\mu\|^{2}
 =3\delta\|\mu\|^{2}.$$ Which proves (\ref{hind9}). Now, since $f$ is unbounded then $g$ is unbounded and satisfies (\ref{hind9}). So,   by using  same computations
used in \cite{{Elq-Red}} $g$ satisfies the variant of d'Alembert's
functional equation $g(xy)+g(\sigma(y)x)=2g(x)g(y)$ for all $x,y\in S.$\\
Finally,  From (\ref{hind3}), (\ref{hind4}) and the triangle
inequality we have
\begin{equation}\label{sar1}
\bigg|\int_{S}\int_{S}f(x\sigma(s)t)d\mu(s)d\mu(t)+\int_{S}f(xst)d\mu(s)d\mu(t)\bigg|\leq
2\delta\|\mu\|,
\end{equation} for all $x,y\in S.$ By using the definition of $g$ the
inequality (\ref{sar1}) can be written as follows
$$\bigg|\int_{S}f(k)d\mu(k)[\int_{S}g(x\sigma(k))d\mu(k)+\int_{S}g(xk)d\mu(k)]\bigg|\leq2\delta\|\mu\|.$$
On the other hand $g$ is a solution of d'Alembert's functional
equation (1.8) then $g$ is central and  we get
$|2g(x)\int_{S}g(k)d\mu(k)|\leq\frac{2\delta\|\mu\|}{|\int_{S}f(k)d\mu(k)|}$
for all $x\in S.$ Since $g$ is unbounded then we deduce that
$\int_{S}g(k)d\mu(k)=0$. That is
$\int_{S}\int_{S}f(st)d\mu(s)d\mu(t)=0$.
\end{proof}
\begin{thm} Let $S$ be a semigroup, let $\sigma$ be an involutive morphism  of $S$. Let $\mu$ be a complex
measure that is a linear combination of Dirac measures
$(\delta_{z_{i}})_{i\in I}$, such that for all $i\in I$, $z_{i}$ is
contained in the center of $S$. Let $\delta>0$ be fixed.  If $f:
S\longrightarrow \mathbb{C}$ is a function which satisfies the
inequality
\begin{equation}\label{hindd1}
 \bigg|\int_{S}f(\sigma(y)xt)d\mu(t)-\int_{S}f(xyt)d\mu(t)-2f(x)f(y)\bigg|\leq \delta
\end{equation} for all $x,y\in S$. Then, either $f$ is bounded on $S$ and $|f(x)|\leq\frac{\|\mu\|+\sqrt{\|\mu\|^{2}+2\delta}}{2}$
for all $x\in S$ or $f$ is a solution of the variant Van Vleck's
functional equation (\ref{eq555}).
\end{thm}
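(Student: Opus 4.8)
The plan is to argue along the stated dichotomy: the bounded alternative is a one‑line estimate, while the unbounded alternative rests entirely on Lemma~4.1. \emph{Bounded case.} Suppose $f$ is bounded and set $m:=\sup_{x\in S}|f(x)|$. Putting $y=x$ in the hypothesis and using $\big|\int_S f(zt)\,d\mu(t)\big|\le m\|\mu\|$ for every $z\in S$ (legitimate since $\mu$ is a finite linear combination of Dirac measures), the triangle inequality yields $2|f(x)|^{2}\le 2m\|\mu\|+\delta$ for all $x\in S$, whence $m^{2}-\|\mu\|\,m-\delta/2\le 0$. Solving this quadratic inequality gives $m\le\big(\|\mu\|+\sqrt{\|\mu\|^{2}+2\delta}\big)/2$, the asserted bound.

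\emph{Unbounded case.} Assume $f$ is unbounded. Then $f$ satisfies the hypothesis of Lemma~4.1, so $\int_S f(t)\,d\mu(t)\neq 0$ and the function $g(x):=\int_S f(xt)\,d\mu(t)\big/\int_S f(t)\,d\mu(t)$ is unbounded, is a non‑zero central solution of the variant of d'Alembert's equation (\ref{eq666}), and satisfies $\int_S g(s)\,d\mu(s)=0$ together with $\int_S\int_S g(st)\,d\mu(s)\,d\mu(t)\neq 0$. By [\cite{07}, Theorem~9.21] there is a non‑zero multiplicative $\chi:S\to\mathbb{C}$ with $g=(\chi+\chi\circ\sigma)/2$. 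Then $\int_S g\,d\mu=0$ reads $\int_S\chi\circ\sigma\,d\mu=-\int_S\chi\,d\mu$, while expanding the double integral by the multiplicativity of $\chi$ and the fact that $\sigma$ is a morphism gives $\int_S\int_S g(st)\,d\mu(s)\,d\mu(t)=\tfrac12\big[(\int_S\chi\,d\mu)^{2}+(\int_S\chi\circ\sigma\,d\mu)^{2}\big]=(\int_S\chi\,d\mu)^{2}$; as this is non‑zero, $\int_S\chi\,d\mu\neq 0$.

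Now put $F:=\big(\int_S\chi\,d\mu\big)\,\frac{\chi\circ\sigma-\chi}{2}$. By the two facts just recorded, $F$ has the form (\ref{elq300}), so $F$ is an exact solution of (\ref{eq555}) by Theorem~2.3; moreover the very computation in the proof of Theorem~2.3 — using only $g=(\chi+\chi\circ\sigma)/2$, the multiplicativity of $\chi$ and $\int_S\chi\circ\sigma\,d\mu=-\int_S\chi\,d\mu$ — shows $\tfrac12\big[\int_S g(\sigma(s)x)\,d\mu(s)-\int_S g(xs)\,d\mu(s)\big]=F(x)$ for all $x\in S$. On the other hand, putting $y=s$ in the hypothesis, integrating with respect to $s$, and using $\int_S f(uxt)\,d\mu(t)=g(ux)\int_S f\,d\mu$, one obtains $\big|f(x)-F(x)\big|\le\delta\|\mu\|\big/\big(2\,\big|\int_S f\,d\mu\big|\big)$ for every $x$. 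Hence $b:=f-F$ is bounded, and since $f$ is unbounded so is $F$.

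The remaining task — promoting ``$f$ lies at uniformly bounded distance from the exact solution $F$'' to ``$f=F$'' — is the only genuine obstacle. Write $f=F+b$ and substitute into the defect $D(x,y):=\int_S f(\sigma(y)xt)\,d\mu(t)-\int_S f(xyt)\,d\mu(t)-2f(x)f(y)$. Since $F$ solves (\ref{eq555}) its contribution cancels, leaving $D(x,y)=\int_S b(\sigma(y)xt)\,d\mu(t)-\int_S b(xyt)\,d\mu(t)-2F(x)b(y)-2F(y)b(x)-2b(x)b(y)$. As $|D(x,y)|\le\delta$ and each integral term has modulus at most $\|b\|_{\infty}\|\mu\|$, this forces $\big|F(x)b(y)+F(y)b(x)\big|\le C$ for all $x,y\in S$, where $C:=\|b\|_{\infty}\|\mu\|+\delta/2+\|b\|_{\infty}^{2}$. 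Choosing a sequence $(x_n)$ with $|F(x_n)|\to\infty$ and fixing $y$, the bound $|b(y)|\le\big(C+\|b\|_{\infty}|F(y)|\big)\big/|F(x_n)|\to 0$ forces $b\equiv 0$, so $f=F$ is a solution of (\ref{eq555}). Everything apart from this final unboundedness argument is elementary estimation together with appeals to Lemma~4.1, Theorem~2.3 and [\cite{07}, Theorem~9.21].
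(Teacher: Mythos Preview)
Your argument is correct, but it follows a genuinely different route from the paper's own proof. Both proofs handle the bounded case by the same elementary quadratic estimate, and both invoke Lemma~4.1 in the unbounded case to know that $g(x)=\int_S f(xt)\,d\mu(t)\big/\int_S f\,d\mu$ is an exact solution of the variant d'Alembert equation with $\int_S g\,d\mu=0$. From there the two arguments diverge. The paper first derives the approximate Wilson-type bounds $|f(xy)+f(\sigma(y)x)-2f(x)g(y)|\le C$ and $|f(xy)-f(\sigma(y)x)-2f(y)g(x)|\le C'$, adds them to an approximate sine addition law, checks that $f$ and $g$ are linearly independent modulo bounded functions, and then appeals to Sz\'ekelyhidi's stability lemma \cite[Lemma~2.1]{z4} to conclude that $f(xy)=f(x)g(y)+f(y)g(x)$ holds \emph{exactly}; from this exact sine addition law (together with $f\circ\sigma=-f$, $g\circ\sigma=g$, $\int_S g\,d\mu=0$) it then rebuilds the exact Van Vleck identity by direct manipulation.

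You instead exploit the explicit form $g=(\chi+\chi\circ\sigma)/2$ immediately: from $\int_S g\,d\mu=0$ and $\int_S\!\int_S g(st)\,d\mu\,d\mu\neq 0$ you extract $\int_S\chi\circ\sigma\,d\mu=-\int_S\chi\,d\mu\neq 0$, write down the exact Van Vleck solution $F=(\int_S\chi\,d\mu)\,(\chi\circ\sigma-\chi)/2$ furnished by Theorem~2.3, show via a single integration that $f-F$ is uniformly bounded, and finish with a clean bilinear unboundedness argument ($|F(x)b(y)+b(x)F(y)|\le C$ with $F$ unbounded forces $b\equiv 0$). This bypasses the Sz\'ekelyhidi machinery entirely and is arguably more transparent; the price is that it leans on the structure theorem for $g$ (and hence on the classification of solutions of the variant d'Alembert equation), whereas the paper's route produces the exact sine addition law for the pair $(f,g)$ as an intermediate result of independent interest.
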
 \begin{proof} Assume that $f$ is an unbounded solution of (\ref{hindd1}).  Replacing $y$ by  $ys$ in (4.1)
and integrating the result obtained with respect to $s$ we get
\begin{equation}\label{om1}
 \bigg|\int_{S}\int_{S}f(\sigma(y)x\sigma(s)t)d\mu(s)d\mu(t)-\int_{S}\int_{S}f(xyst)d\mu(s)d\mu(t)-2f(x)\int_{S}f(ys)d\mu(s)\bigg|\leq
 \delta \|\mu\|
\end{equation} for all $x,y\in S$. By using    (4.1), (4.4) and (4.5)  and the triangle
inequality we get
\begin{equation}\label{om2}
 \bigg|\int_{S}f(s)d\mu(s)f(xy)+\int_{S}f(s)d\mu(s)f(\sigma(y)x)-2f(x)\int_{S}f(ys)d\mu(s)\bigg|\leq
 {3\delta \|\mu\|}
\end{equation} for all $x,y\in S$. Equation which can be written as follows
\begin{equation}\label{om2}
 | f(xy)+ f(\sigma(y)x)-2f(x)g(y)|\leq
 \frac{3\delta \|\mu\|}{|\int_{S}f(s)d\mu(s)|}
\end{equation} for all $x,y\in S$ and where $g$ is the function defined above.
Replacing $x$ by  $xs$ in (4.1) and integrating the result obtained
with respect to $s$ we get
\begin{equation}\label{oma1}
 \bigg|\int_{S}\int_{S}f(\sigma(y)xst)d\mu(s)d\mu(t)-\int_{S}\int_{S}f(xyst)d\mu(s)d\mu(t)-2f(y)\int_{S}f(xs)d\mu(s)\bigg|\leq
 \delta \|\mu\|
\end{equation} for all $x,y\in S$. By using    (4.1), (4.5)   and the triangle
inequality we get
\begin{equation}\label{oma2}
 | f(xy)-f(\sigma(y)x)-2f(y)g(x)|\leq
 \frac{4\delta \|\mu\|}{|\int_{S}f(s)d\mu(s)|}
\end{equation} for all $x,y\in S$. By adding the result of
(\ref{oma2}) and (\ref{om2}) we get
\begin{equation}\label{oma3}
 | f(xy)-f(x)g(y)-f(x)g(y)|\leq
 \frac{2\delta \|\mu\|}{|\int_{S}f(s)d\mu(s)|}
\end{equation} for all $x,y\in S$. Now, we will show  that if  $\alpha f+\beta
g$ is a bounded function on $S$ then $\alpha=\beta=0$.  Assume that
there exits $M$ such that
\begin{equation}\label{hamid1}
 |\alpha f(x)+\beta \int_{S}f(xt)d\mu(t)|\leq M\end{equation}
  for all $x\in S.$ Then $|\alpha f(\sigma(x))+\beta
\int_{S}f(\sigma(x)t)d\mu(t)|\leq M$. Since $f(\sigma(x))=-f(x)$.
So, by using (4.8) and the triangle inequality we get
\begin{equation}\label{hamid2}
|-\alpha f(x)+\beta\int_{S}f(xt)d\mu(t)|\leq
M+|\beta|\frac{6\delta\|\mu\|^{2}}{|\int_{S}f(t)d\mu(t)|}.\end{equation}
By adding the result of (\ref{hamid1}) and (\ref{hamid2}) we get
$2\beta\int_{S}f(xt)d\mu(t)$ is a bounded function. Since $g$ is
unbounded then $\beta=0$ and consequently $\alpha=0$. Now, from
[\cite{z4}, Lemma 2.1]  we conclude that $f,g$ are solutions of the
sine addition law
\begin{equation}\label{om3}
    f(xy)=f(x)g(y)+f(y)g(x)\;x,y\in S.
\end{equation} Since $f(\sigma(x))=-f(x)$ and $g(\sigma(x)=g(x)$
for all $x\in S$ then the pair $f,g$ satisfies the variant  Wilson's
functional equation \begin{equation}\label{omaa3}
    f(xy)+f(\sigma(y)x)=2f(x)g(y)\;x,y\in S.
\end{equation}Taking $y=s$ in (\ref{omaa3}) and integrating the result
obtained with respect to $s$ we get
\begin{equation}\label{om4}
\int_{S}f(xs)d\mu(s)+\int_{S}f(\sigma(s)x)d\mu(s)=0,
\end{equation} because $\int_{S}g(s)d\mu(s)=0$. By replacing $y$ by
$s\sigma(k)$ in in (\ref{om3}) and integrating the result obtained
with respect to $s$ and $k$ we obtain
$$
\int_{S}\int_{S}f(xs\sigma(k))d\mu(s)d\mu(k)+\int_{S}\int_{S}f(xs\sigma(k))d\mu(s)d\mu(k)=2f(x)\int_{S}\int_{S}g(s\sigma(k))d\mu(s)d\mu(k).
$$
That is \begin{equation}\label{om4}
\int_{S}f(xs\sigma(k))d\mu(s)d\mu(k)=f(x)\int_{S}\int_{S}g(s\sigma(k))d\mu(s)d\mu(k).
\end{equation} Now from (4.4) and (\ref{om4}) we get $$|f(x)(\int_{S}\int_{S}g(s\sigma(k))d\mu(s)d\mu(k)-
\int_{S}f(t)d\mu(t))|\leq\frac{\delta\|\mu\|}{2}$$ for all $x\in S.$
Since $f$ is assumed to be unbounded then we get
\begin{equation}\label{om5}
\int_{S}\int_{S}g(s\sigma(k))d\mu(s)d\mu(k)=\int_{S}f(t)d\mu(t).
\end{equation} $g$ satisfies (1.8) and  $\int_{S}g(s)d\mu(s)=0$ implies that $\int_{S}g(yk)d\mu(s)=-\int_{S}g(y\sigma(k))d\mu(s)$. So,  by using the definition of $g$,
equations (\ref{om4}) and (\ref{om5})  we have
\begin{equation}\label{2019}
\int_{S}g(yk)d\mu(k)=-\int_{S}g(y\sigma(k))d\mu(k)=\frac{-\int_{S}\int_{S}f(y\sigma(k)t)d\mu(k)d\mu(t)}{\int_{S}f(s)d\mu(s)}\end{equation}$$
=\frac{-f(y)\int_{S}\int_{S}g(\sigma(k)t)d\mu(k)d\mu(t)}{\int_{S}f(s)d\mu(s)}=\frac{-f(y)\int_{S}f(t)d\mu(t)}{\int_{S}f(s)d\mu(s)}=-f(y).
$$ Finally, From  (\ref{om3}), (4.63), (\ref{om4}) and (\ref{2019}) for all $x,y\in
S$ we have
$$\int_{S}f(\sigma(y)xt)d\mu(t)-\int_{S}f(xyt)d\mu(t)=-\int_{S}f(\sigma(y)x\sigma(t))d\mu(t)-\int_{S}f(xyt)d\mu(t)$$
$$=-[\int_{S}f(\sigma(yt)x)d\mu(t)+\int_{S}f(xyt)d\mu(t)]$$
$$=-2f(x)\int_{S}g(yt)d\mu(t)=2f(x)f(y).$$ That is $f$ is a solution
of Van Vleck's functional equation (\ref{eq555}). This completes the
proof.

\end{proof}


\vspace{1cm} Authors addresses\\
Elqorachi Elhoucien, Redouani Ahmed\\Ibn Zohr University, Faculty of
Sciences\\Department of Mathematic,  Agadir, Morocco\\ E-mail:
{elqorachi@hotmail.com;
Redouani$_{-}$ahmed@yahoo.fr}\\\\
Themistocles M. Rassias \\Department of Mathematics, National
Technical University of Athens, Zografou Campus, 15780, Athens
Greece,\\E-mail: {trassias@math.ntua.gr}


\begin{thebibliography}{20}
\bibitem{elq-boui} B. Bouikhalene and E. Elqorachi An extension of Van Vleck's functional equation for the
sine,  Acta Math. Hungarica. First online: 20 June 2016.
\bibitem{omar0}  Bouikhalene, B.,   Elqorachi, E. and   Rassias J. M., "The superstability
of d'Alembert's functional equation on the Heisenberg group,"
Applied Mathematics Letters, {23}(2000), No.1, 105-109.
\bibitem{da1}   T.M.K. Davison,   D'Alembert's functional equation on
topological groups. Aequationes Math., 76(2008), 33-53.
\bibitem{da2}  T.M.K. Davison, D'Alembert's functional equation on topological monoids. Publ.
Math. Debrecen, 75(2009), 1/2, 41-66.
\bibitem{eb1}  R. Ebanks  Bruce and  H. Stetk\ae r, On Wilson's
functional equations,  Aequationes Math., (2015). DOI:
10.1007/s00010-014-0287-1.
\bibitem{ebanks} B.R. Ebanks and H. Stetk{\ae}r,  d'Alembert's other functional equation on monoids with
involution, Aequationes Math. \textbf{ 89}(1)(2015), 187-206.
\bibitem{Elqorachi} E. Elqorachi, Integral Van Vleck's and Kannappan's  functional equations on
semigroups, Aequationes Math (2016) (to appear).
\bibitem{omar1} E. Elqorachi  and M. Akkouchi, The superstability of the generalized
d'Alembert functional equation, Georgian Math. J. 10(2003), 503-508.
\bibitem{akkouchi} E. Elqorachi and M. Akkouchi, On generalized d'Alembert and Wilson functional
equations. Aequationes Math. 66(3) (2003), 241-256.
\bibitem{Elq-Red} E. Elqorachi  and A. Redouani, Solutions and stability of a variant of  Wilson's functional equation, Demonstratio Math., 2016 (to appear).
\bibitem{omar2}  G. L. Forti., Hyers-Ulam stability of functional equations in several variables.
Aequationes Math. 50 (1995), 143-190.
\bibitem{omar3}  Ger, R., Superstability is not natural, Rocznik
Nauk.-Dydakt. Prace Mat. 159(1993), No. 13, 109-123.
\bibitem{J1} S.-M. Jung,  Hyers-Ulam-Rassias Stability of Functional Equations in Nonlinear Analysis, Springer, New York, 2011.
\bibitem{J2} S.-M. Jung, D. Popa and M. Th. Rassias, On the stability of
the linear functional equation in a single variable on complete
metric groups, J. Global Optimi. , 59 (2014), 165-171.
\bibitem{K}  Pl. Kannappan, A functional equation for the cosine, Canad. Math.
Bull. 2(1968), 495-498.
\bibitem{K2} Pl.  Kannappan,  Functional Equations and
Inequalities with Applications, Springer,  New York, 2009.
\bibitem{omar4}  G.H. Kim, On the stability of trigonometric functional
equations, Advances in Difference Equations, vol. 2007, Article ID
90405, 10 pages.
\bibitem{omar5}  G.H. Kim, On the stability of the
Pexiderized trigonometric functional equation, Applied Mathematics
and Computation, {203}(2008), No. 1,  99-105.
\bibitem{par} P.M. Pardalos, Th.M.
Rassias and A. A. Khan (eds.),  Nonlinear Analysis and Variational
Problems, Springer, 2010.
\bibitem{P} Perkins, A.M., Sahoo, P.K., On two functional equations with
involution on groups related to sine and cosine functions,
Aequationes Math. (2014). doi:10.1007/ s00010-014-0309-z.
  \bibitem{th}  Th.M.
Rassias and J. Brzdek (eds.),  Functional Equations in Mathematical
Analysis, Springer, New York,  2012.
\bibitem{St3} Stetk\ae r, H., Van Vleck's functional equation for the
sine, Aequationes Math. (2014). doi:10.1007/ s00010-015-0349-z.
\bibitem{07} H. Stetk{\ae}r, Functional Equations on Groups,
World Scientific Publishing Co, Singapore (2013).
\bibitem{st4} H. Stetk\ae r, A variant of d'Alembert's
functional equation, Aequationes Math., (2014), DOI
10.1007/s00010-014-0253-y. \bibitem{stkan}  H. Stetk{\ae}r,
Kannappan's functional equation on semigroups with involution.
Semigroup Forum, First online: 23 September 2015.
\bibitem{wilson5} H. Stetk\ae r, On a variant of Wilson's functional equation on
groups, Aequationes math. 68(2004), no. 3, 160-176.
\bibitem{V1} E.B. Van Vleck, A
functional equation for the sine, Ann. Math. Second Ser. 11(4),
161-165 (1910).
 \bibitem{V2} E.B. Van Vleck, A functional equation for the
sine, Additional note. Ann. Math. Second Ser. 13(1/4), 154
(1911-1912).
\bibitem{z1}  L. Sz\'ekelyhidi, On a theorem of Baker, Lawrence and Zorzitto, Proc. Amer. Math. Soc., 84(1982), 95-96.
\bibitem{ze3}  L. Sz\'ekelyhidi, On a stability
theorem, C. R. Math. Acad. Sc. Canada, 3(1981), 253-255.
\bibitem{z4} L. Sz\'ekelyhidi, The stability of the sine and cosine functional equations,
Proc. Amer. Math. Soc., 110(1990), 109-115.
\end{thebibliography}
\end{document}